\documentclass[reqno]{amsart}
\usepackage[foot]{amsaddr}

\usepackage[hidelinks]{hyperref}

\usepackage{url}
\usepackage[nameinlink,capitalize]{cleveref}

\usepackage{amssymb}
\usepackage{amscd}
\usepackage{amsthm}
\usepackage{setspace}
\usepackage{enumerate}
\usepackage{graphicx}
\usepackage{mathtools}
\usepackage{tcolorbox} 
\usepackage{subcaption} 
\usepackage{siunitx}
\usepackage{tikz-cd}
\usepackage{bm,blkarray}
\numberwithin{equation}{section}
\usepackage{algorithm}
\usepackage{algpseudocode}

\usepackage{mathtools}
\usepackage[tableposition=top]{caption}
\usepackage{booktabs,dcolumn}

\renewcommand\P{\mathbb{P}}
\newcommand\E{\mathbb{E}}

\newcommand\R{\mathbb{R}}

\newcommand\Unif{\mathbf{Unif}}

\newcommand\C{\mathbb{C}}

\newcommand\dist{{\operatorname{dist}}}

\newcommand\eps{\varepsilon}

\newcommand{\norm}[1]{{\left\lVert #1 \right\rVert}}

\renewcommand\mod[1]{\ (\mathop{\rm mod}#1)}

\newcommand{\wass}{\mathcal W_1}
\newcommand{\tvdist}{\dist_{\textnormal{TV}}}
\newcommand{\sphere}[1]{\mathcal S^{#1}}

\usepackage{xcolor}      %
\usepackage{listings}    %

\definecolor{codegreen}{rgb}{0,0.6,0}
\definecolor{codegray}{rgb}{0.5,0.5,0.5}
\definecolor{codepurple}{rgb}{0.58,0,0.82}
\definecolor{white}{rgb}{1,1,1}

\lstdefinestyle{mystyle}{
    backgroundcolor=\color{white}, 
    commentstyle=\color{codegreen},
    keywordstyle=\color{magenta}, numberstyle=\tiny\color{codegray}, stringstyle=\color{codepurple}, basicstyle=\ttfamily\footnotesize, breakatwhitespace=false,  breaklines=true,                 
    captionpos=b,                    
    keepspaces=true,                 
    numbers=left,                    
    numbersep=5pt,                  
    showspaces=false,                
    showstringspaces=false,
    showtabs=false,                  
    tabsize=2,
    frame=single,
    rulecolor=\color{black},
    title=\lstname
}

\renewcommand{\mod}{\bmod}

\title[Spectral density estimation]{Spectral density estimation for normal matrices}

\author{Cameron Musco$^\dagger$}
\address{{\scriptsize $^\dagger$Manning College of Information and Computer Sciences, University of Massachusetts Amherst, Amherst, MA 01003}}

\email{cmusco@cs.umass.edu}
\author{Christopher Musco$^\ddagger$}
\address{\scriptsize $^\ddagger$Department of Computer Science and Engineering, New York University, New York, NY 10012}
\email{cmusco@nyu.edu}

\author{Rikhav Shah$^\ast$}
\address{\scriptsize $^\ast$Department of Mathematics, Massachusetts Institute of Technology, Cambridge, MA 02139.}
\email{rdshah@mit.edu}

\author{John Urschel$^\ast$}
\email{urschel@mit.edu}

\author{Nicholas West$^\ast$}
\email{npwest00@mit.edu}

\subjclass[2020]{Primary 65F15; Secondary 68Q17}
\keywords{Normal matrices, spectral density estimation, kernel polynomial method, algorithmic lower bounds, Wasserstein distance, randomized algorithms, matrix-vector query complexity}

\newtheorem{theorem}{Theorem}[section]

\newtheorem{lemma}[theorem]{Lemma}

\newtheorem{proposition}[theorem]{Proposition}

\newtheorem{remark}{Remark}

\usepackage{multicol}
\usepackage{mleftright}
\usepackage{enumitem}
\usepackage{caption}
 \usepackage[nameinlink,capitalize]{cleveref}

\usepackage{refcount}
\newcommand{\secthm}[1]{\texorpdfstring{\Cref{#1}}{Theorem \getrefnumber{#1}}}
\newcommand{\secprop}[1]{\texorpdfstring{\Cref{#1}}{Proposition \getrefnumber{#1}}}

\newcommand{\normal}{{\mathcal N}}

\renewcommand{\Re}{\mathrm{Re}}
\renewcommand{\Im}{\mathrm{Im}}

\newcommand{\wrt}{\,\textnormal d}

\newcommand{\wt}{\widetilde}
\newcommand{\wh}{\widehat}

\newcommand{\abs}[1]{\mleft|#1\mright|}

\newcommand{\magn}[1]{\mleft\|#1\mright\|}

\newcommand{\pare}[1]{\mleft(#1\mright)}

\newcommand{\sqbrac}[1]{{\left[{#1}\right]}}
\newcommand{\set}[1]{{\left\{{#1}\right\}}}
\newcommand{\bmat}[1]{\begin{bmatrix}#1\end{bmatrix}}
\newcommand{\pmat}[1]{\begin{pmatrix}#1\end{pmatrix}}

\newcommand{\floor}[1]{\mleft\lfloor#1\mright\rfloor}

\newcommand{\spliteq}[2]{\begin{equation}#1\begin{split}#2\end{split}\end{equation}}
\newcommand{\eq}[1]{\begin{equation}{#1}\end{equation}}

\DeclareMathOperator*{\Cov}{Cov}

\DeclareMathOperator*{\tr}{tr}

\DeclareMathOperator{\lcm}{lcm}

\newcommand{\tth}{^{\textnormal{th}}}

\makeatletter
\newtheorem*{rep@theorem}{\rep@title}
\newcommand{\newreptheorem}[2]{%
\newenvironment{rep#1}[1]{%
 \def\rep@title{#2 \ref{##1}}%
 \begin{rep@theorem}}%
 {\end{rep@theorem}}}
\makeatother

\newreptheorem{theorem}{Theorem}
\newreptheorem{coro}{Corollary}

\AddToHook{env/lemma/begin}{\crefalias{theorem}{lemma}}
\AddToHook{env/corollary/begin}{\crefalias{theorem}{corollary}}
\AddToHook{env/proposition/begin}{\crefalias{theorem}{proposition}}

\crefname{theorem}{Theorem}{Theorems}
\crefname{lemma}{Lemma}{Lemmas}
\crefname{corollary}{Corollary}{Corollaries}
\crefname{equation}{}{}
\crefname{subsection}{subsection}{subsections}
\crefname{proposition}{Proposition}{Propositions}

\newcommand{\KPM}{\textnormal{KPM}}
\renewcommand{\epsilon}{\eps}

\renewcommand{\paragraph}[1]{
\vspace{2.5mm}\noindent\textbf{#1. }
}

  \usepackage{nth}
  \usepackage{intcalc}

  \newcommand{\cAAAI}[1]{AAAI\ Conference\ on\ Artificial (AAAI)}

\begin{document}

\begin{abstract}
The spectral density estimation problem asks for an algorithm that, given an $n\times n$ matrix $A$, outputs a probability measure that is a good approximation to the uniform distribution on the eigenvalues of $A$, called the spectral density of $A$. This paper considers the setting where $A$ is a large normal matrix that is accessible only through matrix-vector product queries. 
We provide an algorithm that makes just $m$ matrix-vector queries to $A$ and returns, with high probability, a measure within earth mover's distance $O(1/m+\log m/{\sqrt n})$ of the true spectral density of $A$. We provide a complementary lower bound that any algorithm producing an $\eps$-approximation to the true spectral density for large matrices must make $\Omega(1/\eps)$ matrix-vector queries. The lower bound holds even for the more restricted case of real symmetric input matrices. In combination with our upper bound, it shows that spectral density estimation is essentially no harder for complex normal matrices than for real symmetric matrices.
\end{abstract}

\maketitle

\section{Introduction}
Given a matrix $A \in \mathbb C^{n \times n}$, its \textit{spectral density} $\mu_A$ is the uniform distribution on its discrete set of $n$ complex eigenvalues.
Spectral density estimation (SDE) is the problem of producing, given access to $A$, a measure $\wh\mu_A$ that approximates $\mu_A$ in some metric; we will use the earth mover's distance (also called the Wasserstein distance, Kantorovich-Rubinstein distance, and optimal transport distance).

Spectral density estimation arises as a natural problem in a variety of contexts: in physics and quantum chemistry (where the spectral density is often referred to as the  ``density of states'') \cite{b0,b1}, in the study of the Hessians of neural nets during optimization \cite{b2}, and as a subroutine in large-scale eigenvalue computation \cite{b3}. 
In many of these contexts, the matrix $A$ is constrained to be real symmetric or Hermitian, and there are several standard references for spectral density estimation in these cases, e.g. \cite{b4,b1,b5}.
In this paper, we expand the scope of spectral density estimation to \textit{normal} matrices.
Normal matrices which are not Hermitian arise in several settings. The most important class is unitary matrices.
Other examples include circulant matrices, normal random matrix models in physics \cite{b6,b7}, normally regular digraphs \cite{b8}, and directed Cayley graphs where the set of generators is contained in its own normalizer.
Moreover, several matrix operations preserve normality: applying matrix functions, taking Kronecker products, adding commuting normal matrices, and taking the compound matrix (matrix of minors), to name a few.

The spectral density can in principle be computed to a high degree of precision by diagonalizing $A$, at cost $O(n^\omega)$ (or $O(n^3)$ in practice) \cite{b9}. For modern problems where $n$ is enormous or $A$ is given implicitly (e.g., as a black box), this is prohibitive. The earth mover's relaxation---only requiring that $\mu_A$ and $\wh\mu_A$ are close in earth mover's distance---opens the door to dramatically faster algorithms.

In particular, our algorithm is a matrix-vector query (matvec) algorithm.
These algorithms interact with $A$ only through queries of the form $v \mapsto Av$ (and possibly $A^*v$). The matvec complexity of a problem is the number of such queries needed to solve it; in the worst-case, each query may cost $O(n^2)$, and so we obtain a substantially faster algorithm than full diagonalization when the query complexity is $o(n^{\omega-2})$ (or just $o(n)$ in practice).
Important classes of matrix-vector query algorithms are Krylov subspace methods and randomized linear sketching methods, both of which are frequently used to approximate outlying eigenvalues of matrices and solve related spectral problems \cite{b10,b11,b12,b13,b14,b15}.

For Hermitian matrices, the well-known stochastic Lanczos quadrature (SLQ) and kernel polynomial method (KPM) both show that $m$ matrix-vector queries suffice to achieve an earth mover's approximation error of $O(1/m)$ to the spectral density of any Hermitian matrix when the matrix dimension $n$ is sufficiently large \cite{b16,b17}.
A restricted lower bound has also been shown: if the very small approximation error of $O(1/n)$ is desired, then $\Omega(n)$ queries are required \cite{b18,b19}.

Both SLQ and KPM use the fact that the spectrum of $A$ lies on the real line, and SLQ additionally uses the property that compressions of Hermitian matrices are themselves Hermitian. Neither of those hold in the general normal setting \cite{b20}. Because the spectrum of $A$ can be distributed throughout a 2D set in $\C$, one might expect more matrix-vector queries are needed to resolve it. A heuristic based on packing suggests the problem should be quadratically harder: at error $\epsilon$, one can fit $\Theta(1/\epsilon^2)$ well-separated eigenvalue clusters in a bounded region of the plane, versus $\Theta(1/\epsilon)$ on the line. Indeed, such a loss is inherent for related problems like numerical range estimation \cite{b21}.

Contrary to this intuition, we show that $m$ matvecs suffice to achieve approximation error $O(1/m)$ for large normal matrices $A$. In particular, normal SDE is no harder than Hermitian SDE in the matvec model.
We additionally show a substantially broader lower bound as compared to \cite{b18,b19}: for any $\eps>0$, if an algorithm produces an $\eps$ approximation for large matrices, then we show $\Omega(1/\eps)$ queries are required.

Before stating our results more formally, we review some standard notation. We use $\magn\cdot$ to denote the $\ell_2$ (Euclidean) norm  of vectors, and the spectral norm of a matrices. $\magn\cdot_p$ denotes the $\ell_p$ norm of vectors.
For a set $S$, $\magn{f}_S=\sup_{x\in S}\abs{f(s)}$ is the sup-norm of a function $f$ on $S$.
The $\ell_p$-Lipschitz constant of $f:\C^n\to\C$ is $\sup_{x\neq y}\frac{\abs{f(x)-f(y)}}{\magn{x-y}_p}$. 
The Dirac delta distribution $\delta(z)$ is defined by $\int f(z)\delta(z-c)\wrt z=f(c)$ for continuous functions $f$. $\wass$ is the earth mover's distance
defined for compactly supported probability measures $\mu$, $\nu$ as
$\wass(\mu,\nu)=\sup_f\pare{\int f(z)(\text d\mu(z)-\wrt\nu(z))}$,  where the supremum is over functions $f$ with $\ell_2$-Lipschitz constant bounded by 1. We note that there is an equivalent definition of $\wass(\cdot,\cdot)$ in terms of the optimal transport distance between $\mu$ and $\nu$. The above characterization, which will be easier to work with, is often known as Kantorovich-Rubinstein duality. We denote the spectral density of a matrix $A \in \mathbb{C}^{n \times n}$ as
\begin{align}\label{a0}
    \mu_A(z) \equiv \dfrac{1}{n} \sum_{j=1}^n \delta(z - \lambda_j),
\end{align}
where $\lambda_1, \ldots, \lambda_n$ are the eigenvalues of $A$.
Our main algorithmic result is the following.
\begin{theorem}\label{a1}
    Let \(A\in\C^{n\times n}\) be a normal matrix with spectral density $\mu_A$.
    Fix \(0 < \delta< 1\). \cref{a2} computes an approximating measure \(\widehat \mu_A\) satisfying, with probability \(1-\delta\),
    \eq{\label{a3}\dfrac{1}{\norm{A}}\wass(\mu_A, \widehat \mu_A) \le O\pare{\frac1m+\sqrt{\dfrac{\log(m/\delta) \log(m)}{n}}}.}
    The algorithm uses \(O(m)\) matrix-vector products with \(A\) and \(A^*\), \(O(mn)\) storage, and \(O(m^2n)\) additional running time. Moreover, the measure \(\widehat \mu_A\) may be chosen to be supported on just \(O(m^2)\) atoms.
\end{theorem}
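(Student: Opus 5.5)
The plan is a two-dimensional kernel polynomial method driven by Hutchinson-type trace estimation. After rescaling, take $\norm{A}\le 1$, so the spectrum lies in the closed unit disk $D$.

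Because $A$ is normal, $A$ and $A^*$ commute and are simultaneously unitarily diagonalizable. Hence for every polynomial $p(z,\bar z)$ we have
\[
\frac1n\tr p(A,A^*)=\int p(z,\bar z)\wrt\mu_A(z).
\]
All ``moments'' $\frac1n\tr(A^jA^{*k})$ with $j+k\le m$ are therefore determined by $\mu_A$. I will estimate them with a few random vectors: draw $g_1,\dots,g_\ell$ Gaussian, form the Krylov-type blocks $A^jg_i$ and $A^{*k}g_i$ for $j,k\le m/2$, and use $g_i^*A^{*k}A^jg_i$ as a surrogate for the moments. This costs $O(m)$ matvecs per vector, and the $O(m^2)$ inner products of length $n$ give the $O(m^2n)$ time. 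A single vector $g$ suffices up to the $\sqrt{\log/n}$ term, since a Gaussian vector concentrates well in dimension $n$.

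Cleaner is to regard $\nu_g=\sum_i |\langle u_i,g\rangle|^2\delta_{\lambda_i}/\norm g^2$, the spectral measure of $A$ with respect to $g$. It is an honest probability measure with exactly the moments we compute, and its weights are (Dirichlet) close to uniform. The algorithm then outputs a measure $\widehat\mu_A$ built from these moments by a damped (Jackson/Fejér-type) reconstruction on the disk: a product of one-dimensional Jackson kernels in the Fourier variables, or equivalently a 2D Fejér-kernel approximation of Lipschitz functions on $D$ by polynomials in $z,\bar z$ of degree $m$ with error $O(1/m)$. Negative mass from the reconstruction is fixed by projecting onto probability measures on a grid of $O(m^2)$ points, solved as a small linear program or via a quadrature/Gauss-type rule. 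This is where the $O(m^2)$ atoms come from.

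The error analysis has three parts.

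\textbf{Approximation.} Every 1-Lipschitz $f$ on $D$ admits a polynomial $p$ in $z,\bar z$ of degree $\le m$ with $\|f-p\|_D=O(1/m)$, by a 2D Jackson theorem: extend $f$ to a periodic Lipschitz function on a square containing $D$ and convolve with a tensor Jackson kernel. The key point is that error $1/m$ needs only degree $m$, not $m^2$, because the Jackson rate is governed by degree; the packing heuristic counts monomials, and there are $O(m^2)$ of those, which is why $O(m^2)$ atoms appear while only $O(m)$ matvecs are needed.

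\textbf{Stochastic error.} Bound $\wass(\mu_A,\nu_g)$. For a fixed $f$, $\int f\,d\nu_g-\int f\,d\mu_A$ is a Lipschitz function of $g/\norm g$ on the sphere, so it concentrates at scale $\|f\|_\infty/\sqrt n$. A union bound over an $\epsilon$-net of the degree-$m$ polynomials approximating Lipschitz functions would lose a factor of $m$. Instead, I will restrict to the smoothed test functions and use a chaining or Dudley-type bound over the Lipschitz class composed with the reconstruction, which is where the $\sqrt{\log m\log(m/\delta)/n}$ comes from. I expect this to be the main obstacle: obtaining only logarithmic loss, rather than polynomial loss in $m$, when uniformly controlling the deviations over a 2D function class.

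\textbf{Reconstruction stability.} The output measure $\widehat\mu_A$ equals the reconstruction of $\nu_g$ from exact moments (the moments of $\nu_g$ are computed exactly), so $\wass(\widehat\mu_A,\nu_g)=O(1/m)$ by the Jackson step together with positivity of the kernel and the projection step, using the triangle inequality for $\wass$ for the projection. Combining the parts via the triangle inequality gives \eqref{a3}.
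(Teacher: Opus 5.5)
The gap is the stochastic step, which is the real content of \eqref{a3}. You correctly observe that a union bound over a net of Lipschitz test functions loses a factor of $m$. But you then only assert that ``a chaining or Dudley-type bound'' gives a logarithmic loss, and you flag this yourself as the unresolved obstacle. As written, nothing bounds $\wass(\mu_A,\nu_g)$.

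The paper closes this with the Chebyshev moment-matching inequality \Cref{a33}: $\wass(\mu,\nu)\le 4C/m+\pi K$, where $K^2=\sum_{(k,j)\neq(0,0)}(\Gamma_{kj}-\Gamma'_{kj})^2/(k^2+j^2)$. This rests on coefficient decay. Let $c_{kj}$ be the coefficients of a Jackson-smoothed $1$-Lipschitz test function in the basis $\wt T_k(x)\wt T_j(y)$. Parseval applied to the $\theta$- and $\phi$-derivatives of $f(\cos\theta,\cos\phi)$ gives $\sum(k^2+j^2)c_{kj}^2=O(1)$, so Cauchy--Schwarz controls the error by the $1/(k^2+j^2)$-weighted moment discrepancies.

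After that, one only needs the $(m+1)^2$ moments individually. \Cref{a31} applies \Cref{a30} to $b\mapsto b^*\wt T_j(H)\wt T_k(K)b$, which is $2$-Lipschitz because the matrix has norm at most $1$, and then takes a union bound. This gives $\abs{\Gamma_{jk}-\Gamma'_{jk}}\lesssim\sqrt{\log(m/\delta)/n}$ for all $(j,k)$ simultaneously. \Cref{a35} gives $\sum(k^2+j^2)^{-1}=O(\log m)$, and together these are exactly the source of $\sqrt{\log m\log(m/\delta)/n}$. Your instinct to ``restrict to the smoothed test functions'' is half of this argument; the weighted coefficient bound is the missing half.

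Your chaining route could also be completed, but its steps have to be supplied:
\begin{enumerate}
\item Set $X_f=b^*f(A)b-\frac1n\tr f(A)$ and normalize $f(0)=0$. Then \Cref{a30} makes the increments $X_f-X_{f'}$ subgaussian at scale $\norm{f-f'}_\infty/\sqrt n$.
\item The sup-norm $\eps$-entropy of $1$-Lipschitz functions on the disk is $O(\eps^{-2})$.
\item Since $\abs{X_f-X_{f'}}\le 2\norm{f-f'}_\infty$ deterministically, truncating Dudley's integral at $\eps_0=1/m$ gives $\E\,\wass(\mu_A,\nu_g)=O(1/m+\log m/\sqrt n)$.
\item The high-probability version follows because the supremum is itself a $2$-Lipschitz function of $b$.
\end{enumerate}

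There are also two smaller issues. First, extending $f$ periodically ``on a square containing $D$'' and convolving produces trigonometric polynomials in $x,y$, whose integrals are not available from matvecs. You need the substitution $x=\cos\theta$, $y=\cos\phi$, as in \Cref{a27}.

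Second, with that tensor Jackson kernel, $\mu_{\KPM}$ is automatically a probability measure (\Cref{a29}), so no projection or linear program is needed. An LP over $O(m^2)$ grid points could also exceed the $O(m^2n)$ time budget. The $O(m^2)$ atoms come instead from tensor Gauss--Chebyshev quadrature with nonnegative weights $w_iw_jp_m(x_i,x_j)$. This preserves all moments of bidegree at most $m$, so it costs only $O(1/m)$ by \Cref{a33} with $K=0$.
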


\begin{remark}
    Several variants of \Cref{a2} are described in \Cref{a4} for various purposes and applications, all still achieving \cref{a3} or the appropriate analog.
\end{remark}

\begin{remark}
In addition to applying to a broader class of matrices, \Cref{a2}
improves upon the run-times of the algorithms of \cite{b17,b19} for Hermitian SDE. Namely, the $m$ dependence is improved by avoiding an optimization problem used by those algorithms to enforce strict moment-matching not required in our analysis.
Additionally, a variant of \Cref{a2} described in \Cref{a4} allows us to output $\wh\mu_A$ supported on just $O(m)$ atoms in the Hermitian case, compared to $O(m^{1.5})$ atoms in the algorithm of \cite{b19}. We also note that the arguments of \cite{b19} for producing atomic measures do extend to the non-Hermitian / 2D setting, but this only gives a construction for $O(m^3)$ atoms compared to $O(m^2)$ of \Cref{a1}.
\end{remark}

\begin{figure}[t]
  \centering
    \includegraphics[trim={0 1.5cm 0 1cm},clip,width=1\linewidth]{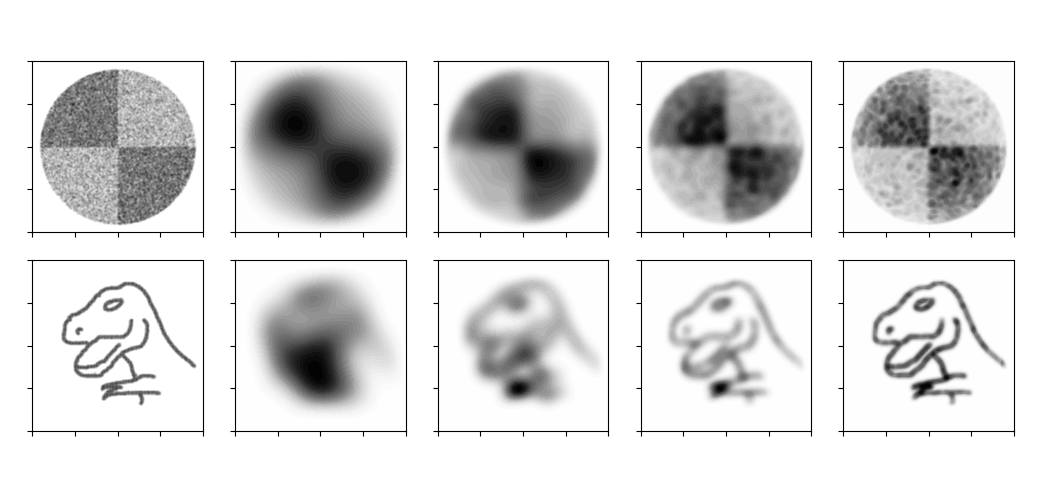} 
    \caption{Spectral density estimation for two example matrices.
    The leftmost column is a scatter plot of the eigenvalues of the matrices. The next columns left-to-right are the result of running \Cref{a2} for $m=16$, $32$, $64$, and $128$ respectively.
    The first row is with the 150,000$\times$150,000 matrix where 100,000 eigenvalues are uniformly distributed in the disk, and 50,000 additional eigenvalues are uniformly distributed in the intersection of the disk with the second and fourth quadrants.
    The second row is with the 27,078$\times$27,078 matrix
    which has has its eigenvalues trace out a version of the ``Anscombosaurus'' \cite{b22,b23}. All plots depict the unit square $[-1,1]+i[-1,1]\subset\C$.
    The total time for generating the entire figure (including plotting) on a standard laptop with an i7 core took less than 2 seconds.
    }
    \label{a5}
\end{figure}

An alternative way to state \Cref{a1} is that to achieve a target error of $\eps\magn A$ in earth mover's distance for an $n \times n$ matrix with $n$ sufficiently large, \Cref{a2} need only make $O(1/\eps)$ queries. We complement this result with a matching lower bound. 
\begin{theorem}\label{a6}
    Consider any triplet of parameters $\eps,\delta,n$ satisfying $\delta\ge\eps$ and $n\gg\max\pare{\frac1{\eps^4\delta^2},\pare{\frac2\eps}^{1/\delta}}$. If a randomized, adaptive, matrix-vector query algorithm, on input any $n\times n$ real symmetric matrix $A$, outputs a distribution that is within $\eps \magn A$ in earth mover's distance of the spectral density $A$ with probability more than $2\delta$, then it must make $\Omega(1/\eps)$ matrix-vector queries.
\end{theorem}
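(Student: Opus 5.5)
The plan is to use Yao's minimax principle: I would construct two distributions over real symmetric matrices that no algorithm making $m\ll 1/\eps$ queries can tell apart, yet whose spectral densities are $\gtrsim\eps$ apart in $\wass$.

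\textbf{Construction.} All matrices will have $\magn A\le 1$. Let $k\asymp 1/\eps$. I would pick two probability measures $\nu_1,\nu_2$ on $[-1,1]$ with the following properties:
\begin{itemize}
\item they agree on all moments up to degree $2k$;
\item they satisfy $\wass(\nu_1,\nu_2)\ge c/k\ge 3\eps$;
\item each is a discrete measure supported on $O(k)$ atoms with rational weights.
\end{itemize}
Gauss quadrature supplies these. Take a measure $\rho$ (say, arcsine) and two different $(k+1)$-point Gauss--Radau type quadratures of $\rho$, for instance with one node pinned at $-1$ versus at $+1$. Both are exact for polynomials of degree $\le 2k$. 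Their nodes interlace at spacing $\Theta(1/k)$, so a $1$-Lipschitz zigzag test function separates them by $\Omega(1/k)$. Let $D_i$ be a diagonal matrix whose empirical spectral measure equals $\nu_i$, up to rounding of the weights, which costs $O(k/n)$ in $\wass$. Set $A_i=QD_iQ^T$ with $Q$ Haar-orthogonal, and feed the algorithm $A_1$ or $A_2$ with probability $1/2$ each. If the algorithm succeeds with probability more than $2\delta\ge 2\eps$ on every input, then it must output measures that are distinguishable. Concretely, the output lands within $\eps$ of $\nu_1$ or of $\nu_2$ (these are disjoint events) with excess probability, so the two transcript distributions differ in total variation by $\gtrsim\delta$.

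\textbf{Reduction to a Krylov transcript.} Using rotational invariance of $Q$, I would show the following in the standard way. Without loss of generality, each query is a unit vector orthogonal to all previous queries and responses. Conditioned on the transcript, $A$ is Haar-conjugated on the orthogonal complement. Consequently, the whole transcript of an adaptive $m$-query algorithm can be simulated from the block Lanczos data of $A$ started at a random Gaussian vector. For the cleanest version, I would first reduce to the Lanczos tridiagonal (Jacobi) coefficients $\alpha_1,\beta_1,\dots,\alpha_m,\beta_m$ of the vector spectral measure
\[\mu_v=\sum_j |\langle v,q_j\rangle|^2\delta(\lambda_j),\qquad v \text{ uniform on }\sphere{n-1},\]
after handling block adaptivity by a union/chain argument over at most $m$ fresh directions. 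These coefficients are continuous functions of the moments of $\mu_v$ up to degree $2m$. It therefore suffices to bound the total variation between the laws of $(\int x^p\,d\mu_v)_{p\le 2m}$ under $D_1$ and under $D_2$.

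\textbf{Main obstacle.} The key step, and the one I expect to be hardest, is this total-variation bound. The weights $|\langle v,q_j\rangle|^2$ form a Dirichlet$(1/2,\dots,1/2)$ vector. Grouping the weights by atom, $\mu_v$ equals $\sum_a W_a\delta(x_a)$, where the vector $(W_a)$ is Dirichlet with parameters $n\nu_i(\{x_a\})/2$, so it is close to Gaussian fluctuations of size $n^{-1/2}$ around $\nu_i$. Because $\nu_1$ and $\nu_2$ share their first $2k$ moments, the mean vectors coincide. The remaining task is to compare the two fluctuation laws, which are Gaussian-like and pushed forward to the moment space of dimension $2m\le 2k$. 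I would try first a direct Hellinger/KL computation on the pushforward, via a local CLT for Dirichlet vectors with large parameters. The error this produces is controlled by $m$ relative to $n$ and the number of atoms. The hypothesis $n\gg \eps^{-4}\delta^{-2}$ should make the CLT error $\ll\delta$.

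If the covariances of the two pushforwards genuinely differ, the plan needs a fallback. In that case I would choose the measures so that the low-degree polynomial spaces see identical covariance structure; the moment matching of Gauss quadrature up to degree $2k\ge 2m$ gives $\int pq\,d\nu_1=\int pq\,d\nu_2$ for $\deg p,\deg q\le m$. I expect the second hypothesis $n\gg(2/\eps)^{1/\delta}$ to enter when bounding the probability of atypically small weights, i.e.\ when the Lanczos process would terminate early. Once the total-variation bound is below $\delta$, the contradiction with the first step yields $m=\Omega(k)=\Omega(1/\eps)$.
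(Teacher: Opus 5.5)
\textbf{Gap: two instances cannot handle small success probability.} Your reduction in the first step does not match the quantifiers of the theorem. The algorithm is only assumed to succeed with probability more than $2\delta$, where $\delta$ may be as small as $\eps$. Take just two instances whose densities are $3\eps$-separated. The zero-query algorithm that outputs $\nu_1$ or $\nu_2$ according to a fair coin succeeds with probability $1/2>2\delta$ on both whenever $\delta<1/4$. In general, $P_1(E_1),P_2(E_2)>2\delta$ with $E_1\cap E_2=\emptyset$ yields only $\tvdist\ge 4\delta-1$, which is vacuous. So no total-variation bound on transcripts can produce a contradiction from a pair.

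To rule out success probability $2\delta$, you need about $s=1/\delta$ instances whose spectral densities are pairwise more than $2\eps$ apart yet share all moments up to degree $\Theta(1/\eps)$. Then any single output is good for at most one instance, and the average success is at most $1/s$ plus the total-variation error. The paper takes $A_j=B_{m_j}\otimes I_{n/m_j}$, where $B_m$ is the diagonal matrix of the $m$ Chebyshev nodes and $m_j=1/\eps+j-1$ for $j\le s$. Gauss--Chebyshev exactness makes all traces agree up to power $m_1$. \Cref{a37} gives pairwise separation $\Omega(1/m_{s-1})=\Omega(\eps)$, using $\delta\ge\eps$ so that $m_s\le 2/\eps$. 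Since $n$ must be divisible by every $m_j$, one takes $n=\prod_j m_j\le(2/\eps)^{1/\delta}$. That is the actual origin of the hypothesis $n\gg(2/\eps)^{1/\delta}$; it has nothing to do with early termination of Lanczos. Your Yao-style framing survives only once the Gauss--Radau pair is replaced by such a family together with a separation argument like \Cref{a37}.

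\textbf{Two further steps are not yet proofs.} First, an adaptive algorithm is simulated by block Krylov data from $m$ independent Gaussian starts (\Cref{a41}). That data is equivalent in law to the bilinear forms $z_{j'}^\top A^iz_j$ with $i+j+j'\le 2m$ (\Cref{a42}). The cross terms with $j\neq j'$ are not functions of any single vector measure $\mu_v$, and their joint law is not determined by single-vector marginals. So the ``union/chain argument'' reducing everything to one Lanczos run is not justified.

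Second, the total-variation comparison you flag as the main obstacle is left as a hoped-for local CLT. The paper handles it as follows:
\begin{enumerate}
\item Diagonalize $B$ and write the matrices $Z^\top A^iZ$, $i\le 2m$, as independent Wishart blocks multiplied by a Vandermonde matrix.
\item Replace each Wishart block by its Gaussian approximation using the non-asymptotic bound of \Cref{a44}.
\item Observe that the resulting Gaussian has mean and covariance determined by $\tr(A^i)$ for $i\le 4m$. This gives \Cref{a7} with error $O(m^{3/2}p^{1/2}/n^{1/2})$.
\end{enumerate}
With $m\approx 1/(4\eps)$ queries and $p\le 2/\eps$, this error is $\ll\delta$ exactly when $n\gg\eps^{-4}\delta^{-2}$. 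Note also that covariances of the degree-$2m$ moments involve moments of degree $4m$. Your fallback therefore needs agreement up to degree $4m$, not the $2m$ you state.
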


\begin{remark}
    The lower bound of \Cref{a6} is against algorithms which need only succeed for real symmetric inputs. Thinking of $\eps$ and $\delta$ as fixed constants, this matches our upper bound in the general normal case,  establishing that there is no gap between the complexities of the normal and real symmetric spectral density estimation problems.
\end{remark}

\begin{remark}
A weaker lower bound changes the order of the quantifiers on the parameters: suppose one wants an algorithm which for fixed $n$ can produce an $\eps$ approximation for all $\eps>0$, including $\eps$ going to 0. In this case, there are several existing lower bounds, e.g. \cite{b24,b19,b25} all prove the following: for each $n$, there exists an $\eps$ such that approximating the spectral density to $\eps$ earth mover's error requires $O(1/\eps)$ queries.
In particular, their constructions are for $\eps=\Theta(1/n)$. Stated differently, they show that to achieve $O(1/n)$ error, one must learn a constant fraction of the matrix. However, this is not typically the setting in which SDE is interesting.
The goal is to obtain fast algorithms which require very few queries relative to the matrix size and output a relatively coarse approximation.

    In the practical regime where $\eps$ is a small, fixed constant (or tends to 0 slowly), and $n$ is large, the previous lower bounds do not apply. \Cref{a6} addresses this gap: it applies whenever $\eps=\Omega(1/n^{1/4})$. This resolves the complexity of spectral density estimation outside the window $1/n\ll \eps\ll1/n^{1/4}$.
\end{remark}

 \Cref{a6} follows from a more general statement, \Cref{a7}, which isolates the information-theoretic bottleneck at the core of spectral density estimation using the ideas of \cite{b26}. We concretely find that any algorithm making only $m$ matrix vector queries to real symmetric $A$ does not learn much unitarily-invariant information about $A$ beyond the values of $\tr(A^j)$ for $j=1,\ldots,4m$, which are just the first $4m$ moments of the spectral density of $A$. These moments are both necessary and sufficient to approximate the spectral density of $A$.

\section{Algorithm and its analysis}\label{a8}

In this section, we derive \Cref{a2} for approximating the spectral density of a normal matrix and  discuss its theoretical properties, most notably by proving \Cref{a1}. We begin by introducing the algorithm itself: given matrix-vector access to \(A\) and \(A^*\), we explain how to compute mixed Chebyshev moments of a random surrogate spectral measure and then use these moments to form an approximation to $\mu_A$. After presenting the algorithm, we turn to its analysis via a multivariate version of the kernel polynomial method (KPM). Although the KPM is well known in the Hermitian setting \cite{b4}, rigorous error bounds for approximating probability distributions are more recent \cite{b17}, and the multivariate version we require here has received comparatively little attention, excepting the appendix of \cite{b19}. For this reason, we include a discussion of Jackson's theorems, their multivariate extension, and how they are deployed our setting. We conclude the section with some algorithmic variants and numerical examples, the results of which are depicted in \Cref{a5}.

\paragraph{Weighted spectral density}
A key object appearing in our analysis is the \textit{weighted} spectral density of a matrix $A$. If we have the eigendecomposition $A=\sum_{j=1}^n\lambda_jv_jv_j^*$, then the weighted spectral density with respect to a unit vector $b\in\C^n$ is the measure
\begin{equation}\label{a9}
\mu_{A,b}(z)\equiv\sum_{j=1}^n\delta(z-\lambda_j)\abs{b^*{v_j}}^2.
\end{equation}
When $b=\frac1{\sqrt n}\pare{v_1+\cdots+v_n}$, this coincides with the unweighted spectral measure $\mu_{A,b}=\mu_A$ defined in \eqref{a0}. The analysis of our algorithm consists of two steps: (1) show that we can compute an approximation to $\mu_{A,b}$ for any $b$ of our choice (without ever explicitly forming $\mu_{A,b}$), and (2) show that $\mu_{A,b}$ is an approximation to $\mu_A$ with high probability over a Haar random selection of $b$.

\paragraph{Chebyshev polynomials and moments}
Playing a role both steps mentioned above are the Chebyshev polynomials and Chebyshev moments. 
 We let \(T_j(x)\) denote the \(j\tth\) Chebyshev polynomial of the first kind,  defined by the three-term recurrence,
\spliteq{\label{a10}}{
T_0(x) &= 1, \quad T_1(x) = x, \\
T_{n+1}(x) &= 2x\,T_n(x) - T_{n-1}(x), \quad n \ge 1.}
The Chebyshev polynomials are orthogonal on the interval \([-1, 1]\) with respect to the weight function \(w(x) = (1-x^2)^{-1/2}\): 
\begin{equation}\label{a11}
    \int_{-1}^1 \dfrac{T_j(x)T_k(x)}{\sqrt{1-x^2}}\wrt x = \begin{cases}
        0 & j \neq k \\
        \pi & j = k = 0 \\
        \pi/2 & j = k \geq 1 
    \end{cases}.
\end{equation}
We write \(\wt{T}_0(x) = 1/\sqrt{\pi}\) and \(\wt{T}_j(x) = \sqrt{2/\pi} \cdot T_j(x)\) for the \textit{normalized} Chebyshev polynomials, which are an orthonormal family.
For a measure $\mu$ supported on the square $[-1,1]^2$,
define the $(j,k)$ mixed Chebyshev moment as
\[\Gamma_{jk} = \int_{-1}^1 \int_{-1}^1 \wt T_j(x)\wt T_k(y) \wrt\mu(x, y).\]
That is, \(\Gamma_{jk}\) is obtained by integrating the bivariate polynomial \(\wt T_j(x)\wt T_k(y)\) over the unit square with respect to \(\mu\). By identifying $z=x+iy\in\C$, this definition applies to measures $\mu$ on the unit square $[-1,1]+i[-1,1]$ in $\C$ as well.

In our application to spectral density estimation, we  assume \(\norm{A}_2 \leq 1\) so that the real and imaginary parts of the eigenvalues of \(A\) each lie in the interval \([-1,1]\). This is without loss of generality, as one may apply the algorithm to a shifted and rescaled matrix \(\wt A = c_1(A-c_2)\) and then shift and rescale the output \(\widehat\mu\) to \(\widehat \mu(z/c_1+c_2)\).\footnote{The desired values of $c_1$ and $c_2$ can be computed efficiently. For example, one may take $c_2 = (x^*Ax)/(x^*x)$ with $x$ sampled randomly; in fact, any element of the numerical range of $A$ will suffice. Then since a constant-factor estimate of $\norm{A-c_2I}_2$ suffices one may use a few steps of power iteration in order to estimate $c_1 \leq 1/\norm{A-c_2 I}_2$.}  

\subsection{Chebyshev moments of weighted spectral measure}

Our first goal is to compute the mixed Chebyshev moments $\Gamma_{jk}$ for the weighted spectral measure $\mu_{A,b}$.
The key observation is that
that the Toeplitz decomposition \(A = H + iK\), i.e. \(H = (A+A^*)/2\) and \(K = (A-A^*)/2i\) of a normal matrix allows us to compute these moments without actually forming $\mu_{A,b}$. %
\begin{lemma}\label{a12}
    Let \(A = H+iK\) be the Toeplitz decomposition of the normal matrix \(A\) so that \(H = (A+A^*)/2\) and \(K = (A-A^*)/2i\). Then for any fixed polynomials \(p, q\) it holds that
    \begin{equation}
        \int_{-1}^1\int_{-1}^1\overline p(x)q(y)\wrt\mu_{A,b}(x,y) = b^* p(H)^*q(K)b
    \end{equation}
    where $\bar p$ denotes the polynomial with coefficients that are the conjugates of the coefficients of $p$.
\end{lemma}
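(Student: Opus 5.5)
The plan is to reduce everything to the common eigenbasis of $H$ and $K$ and compute both sides explicitly. Since $A$ is normal, the spectral theorem gives a unitary eigendecomposition $A=\sum_{j=1}^n\lambda_j v_jv_j^*$ with $\{v_j\}$ orthonormal, and then $A^*=\sum_j\bar\lambda_j v_jv_j^*$. Writing $\lambda_j=x_j+iy_j$ with $x_j,y_j\in\R$, I would first show that
\[
H=\sum_j x_j v_jv_j^*,\qquad K=\sum_j y_j v_jv_j^*,
\]
which follows directly from $H=(A+A^*)/2$ and $K=(A-A^*)/2i$. In particular $H$ and $K$ are Hermitian, they commute, and they are simultaneously diagonalized by the same unitary. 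This is the only place normality is used. For a non-normal $A$ the eigenvectors are not orthonormal, and $A^*$ does not share them.

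Next I would apply functional calculus. For any polynomial $q$, $q(K)=\sum_j q(y_j)v_jv_j^*$, because powers of a diagonalized matrix act on the eigenvalues and sums are linear. Likewise $p(H)=\sum_j p(x_j)v_jv_j^*$, so $p(H)^*=\sum_j\overline{p(x_j)}\,v_jv_j^*$. Since $x_j$ is real, $\overline{p(x_j)}=\bar p(x_j)$, where $\bar p$ has conjugated coefficients. Multiplying and using orthonormality $v_j^*v_k=\delta_{jk}$ gives
\[
p(H)^*q(K)=\sum_j \bar p(x_j)q(y_j)v_jv_j^*,
\]
and therefore
\[
b^*p(H)^*q(K)b=\sum_j \bar p(x_j)q(y_j)\,|b^*v_j|^2.
\]

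Finally, by the definition \eqref{a9}, $\mu_{A,b}$ is the discrete measure placing mass $|b^*v_j|^2$ at the point $(x_j,y_j)$ under the identification $z=x+iy$. Integrating $\bar p(x)q(y)$ against it yields exactly the sum above. The points lie in $[-1,1]^2$ under the standing assumption $\|A\|\le1$, since $|x_j|,|y_j|\le|\lambda_j|\le1$. Even without this assumption, the integral over the support is what matters.

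No step is a real obstacle. The only points needing care are the conjugation bookkeeping, where one must use that $x_j$ is real so $\overline{p(x_j)}=\bar p(x_j)$, and the repeated-eigenvalue case. The latter is harmless because any orthonormal eigenbasis works and \eqref{a9} is defined through that same basis.
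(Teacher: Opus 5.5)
Your proposal is correct and follows essentially the same route as the paper: simultaneously diagonalize $H = U\,\Re(\Lambda)U^*$ and $K = U\,\Im(\Lambda)U^*$ using the spectral theorem, apply the functional calculus, and identify $\sum_j \bar p(\Re\lambda_j)\,q(\Im\lambda_j)\abs{b^*v_j}^2$ with the integral against $\mu_{A,b}$. The paper's proof writes $\bar p(H)$ directly in place of $p(H)^*$ without comment, whereas you justify $p(H)^* = \bar p(H)$ explicitly from the reality of the $x_j$.
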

\begin{proof}
Since $A$ is normal, by the spectral theorem we may write its eigendecomposition as $A=U\Lambda U^*$ where $U$ is unitary. From the definitions $H=(A+A^*)/2$ and $K=(A-A^*)/2i$, we have that $H$ and $K$ have the corresponding eigendecompositions
    \[
        H = U \Re(\Lambda)U^*, \quad K = U \Im(\Lambda)U^*.
    \]
    Then we observe that the quadratic form simplifies considerably: 
    \begin{align*}
        b^* \overline{p}(H)q(K)b & = b^*U\overline{p}(\operatorname{Re}(\Lambda))U^*Uq(\operatorname{Im}(\Lambda))U^*b \\
        & = (p(\operatorname{Re}(\Lambda))U^*b)^*(q(\operatorname{Im}(\Lambda))U^*b) \\ 
        & = \sum_{j=1}^n\overline{p}(\operatorname{Re}(\lambda_j))q(\operatorname{Im}(\lambda_j)) |(U^*b)_j|^2 \\
        & = \int_{-1}^1 \int_{-1}^1 \overline{p}(x)q(y) \wrt\mu_{A,b}(x, y).
    \end{align*}
    The first equality follows from the definition of matrix functions that if \(B = VDV^{-1}\) then \(f(B) = Vf(D)V^{-1}\), the second and third equalities rewrite the result as an inner product, and the final equality rewrites the discrete sum as an integral over the weighted spectral measure \(\mu_{A,b}\) as defined in \cref{a9}. 
\end{proof}

If we collect the mixed Chebyshev moments of \(\mu_{A,b}\) up to degree $m$ into an \((m+1)\)-by-\((m+1)\) matrix \(\Gamma\), then \Cref{a12} provides the factorization
\begin{equation}\label{a13}
    \Gamma = X^*Y
\end{equation}
where 
\[
    X = \bmat{\wt T_0(H)b& \wt T_1(H)b&\cdots& \wt T_m(H)b}
\]
and
\[
    Y= \bmat{\wt T_0(K)b&\wt T_1(K)b&\cdots&\wt T_m(K)b}.
\]
The matrices \(X, Y\) can be computed using the three-term recurrence of the Chebyshev polynomials, costing only \(m\) matrix-vector products with each of \(H\) and \(K\). If \(A\) and \(A^*\) are only available implicitly, then one can simulate the application of \(H\) via \(Hx = (Ax+A^*x)/2\), and similarly for \(K\). The resulting procedure is summarized in \Cref{a14}, which as a result requires \(O(m)\) matrix-vector products with \(H\) and \(K\) or \(O(m)\) matrix-vector products with \(A\) and \(A^*\). It also requires \(O(mn)\) storage for \(X\) and \(Y\) and \(O(m^2n)\) additional operations for the formation of \(\Gamma = X^*Y\). 

\begin{algorithm}[H]
    \caption{\textsc{ChebMoments\((A, b, m)\)} \\
    Computes mixed Chebyshev moments of \(\mu_{A, b}\) up to maximal degree \(m\) using three-term recurrence.}\label{a14}
    \begin{algorithmic}[1] %
        \Require $A\in\C^{n\times n}$, $\magn A\le1$, $b \in \C^{n}$,
        \Ensure $(\Gamma)_{jk} = \int \wt T_j(x) \wt T_k(y) \wrt\mu_{A,b}(x, y)$ are the mixed Chebyshev moments of \(\mu_{A,b}\) of orders \(0 \leq j, k \leq m\).       
        \State $H\gets ({A+A^*})/2$
        \State $K\gets ({A-A^*})/{2i}$
        \State $x_0 \gets b/\sqrt{\pi}, \quad x_1 \gets \sqrt{2}Hx_0, \quad x_2 \gets 2Hx_1 - \sqrt{2}x_0$
        \State $y_0 \gets b/\sqrt{\pi},\quad  y_1 \gets \sqrt{2}Ky_0, \quad y_2 \gets 2 Ky_1 - \sqrt{2}y_0$
        \For{$j\gets 2$ to $m-1$}
            \State $x_{j+1} \gets2Hx_j-x_{j-1}$
            \State $y_{j+1} \gets2Ky_j-y_{j-1}$
        \EndFor
        \State $X \gets [x_0,  x_1,  x_2, \cdots,  x_m]$ 
        \State $Y \gets [y_0, y_1, y_2, \cdots, y_m]$
        \State $\Gamma \gets X^*Y $
        \State \Return $\Gamma$
    \end{algorithmic}
\end{algorithm}

\subsection{Approximating the weighted spectral measure. }
The previous subsection gave an algorithm for computing the mixed Chebyshev moments $\Gamma_{jk}$ of $\mu_{A,b}$. We now show how to use a multivariate version of the kernel polynomial method (KPM) \cite{b4} to convert these moments to an approximation to $\mu_{A,b}$.\footnote{This multivariate generalization of the KPM was described briefly but not analyzed in \cite{b4}.}
Our approximation will be
\begin{equation}\label{a15}
    \wrt\mu_{A,b}(x,y) \approx \wrt\mu_{\KPM}(x,y) \equiv \dfrac{p_m(x,y)}{\sqrt{1-x^2}\sqrt{1-y^2}}\, \wrt x\, \wrt y,
\end{equation}
where
\[
    p_m(x,y) = \sum_{j=0}^m\sum_{k=0}^m \Gamma_{jk} \rho_j\rho_k \wt{T}_j(x)\wt{T}_k(y).
\]
The \(\wt{T}_j\) are the normalized Chebyshev polynomials and the \(\rho_j\) are fixed constants, sometimes called \textit{damping coefficients}, which we specify momentarily. We summarize the resulting procedure in \Cref{a16}, which uses \Cref{a14} as a subroutine for computing the Chebyshev moments. The central theoretical result of this subsection is the following theorem, whose proof is found at the end of this subsection: 
\begin{theorem}\label{a17}
    Let \(\mu\) be a probability distribution on \([-1, 1]^2\) and let \(\mu_{\KPM}\) be defined as in \cref{a15}. Then 
    \begin{equation}
        \wass(\mu, \mu_{\KPM}) \le \frac{12}{m}.
    \end{equation}
\end{theorem}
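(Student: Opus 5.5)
We plan to prove the bound via Kantorovich--Rubinstein duality. Fix $f$ on $[-1,1]^2$ with $\ell_2$-Lipschitz constant at most $1$. The goal is to bound $\int f\,\wrt\mu-\int f\,\wrt\mu_{\KPM}$ by $12/m$.

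The key structural observation is that $\mu_{\KPM}$ is $\mu$ smoothed by a tensor-product Jackson kernel. Write $\wt T_j(x)\wt T_k(y)$ for the orthonormal basis with respect to the weight $w(x)w(y)$. Then
\[\int f\,\wrt\mu_{\KPM}=\int\!\!\int f(x,y)\,\frac{p_m(x,y)}{\sqrt{1-x^2}\sqrt{1-y^2}}\wrt x\wrt y.\]
Substituting the definition of $\Gamma_{jk}$ and exchanging the order of integration gives $\int f\,\wrt\mu_{\KPM}=\int (K_m f)\,\wrt\mu$, where
\[(K_mf)(s,t)=\sum_{j,k\le m}\rho_j\rho_k\,\wt T_j(s)\wt T_k(t)\int\!\!\int f\,\wt T_j\wt T_k\,w\,w.\]
So $K_mf$ is the damped Chebyshev partial sum of $f$. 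Hence
\[\abs{\int f\,\wrt(\mu-\mu_{\KPM})}\le \magn{f-K_mf}_{[-1,1]^2},\]
and since $\mu$ is a probability measure it suffices to prove a uniform Jackson-type estimate $\magn{f-K_mf}\le 12/m$ for every $1$-Lipschitz $f$. We choose the $\rho_j$ to be the standard Jackson damping coefficients. This makes the univariate operator
\[(J_m g)(s)=\sum_j\rho_j\wt T_j(s)\int g\,\wt T_j\,w\]
correspond, under $s=\cos\theta$, to convolution of $g(\cos\theta)$ with the nonnegative, even, unit-mass Jackson kernel on the circle.

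The proof then reduces to two univariate facts plus a tensorization. First, $J_m$ is a positive operator that preserves constants: $\rho_0=1$ and the kernel is nonnegative. Second, the classical Jackson bound holds: if $g$ is $L$-Lipschitz on $[-1,1]$, then $g(\cos\theta)$ is $L$-Lipschitz in $\theta$, and convolution against the Jackson kernel gives $\magn{g-J_mg}\le cL/m$. Here $c$ is controlled by the first absolute moment of the kernel, $\int\abs{\theta}\,\mathcal K_m(\theta)\wrt\theta=O(1/m)$. For the bivariate case, $K_m=J_m^{(x)}J_m^{(y)}$ acts on each variable separately, and
\[f-K_mf=(f-J_m^{(x)}f)+J_m^{(x)}\bigl(f-J_m^{(y)}f\bigr).\]
The function $f(\cdot,t)$ is $1$-Lipschitz in $x$ for each fixed $t$, and similarly in $y$, since $\ell_2$-Lipschitz implies coordinatewise Lipschitz. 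Because $J_m^{(x)}$ is a sup-norm contraction (positive and unital), both terms are at most $c/m$, giving $2c/m$.

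The main obstacle is pinning the constant so that $2c\le 12$, which requires $c\le 6$. We plan to take $\rho_j$ as the Fejér-squared Jackson coefficients and bound $\int\abs{\theta}\mathcal K_m$ via $\abs\theta\le \frac{\pi}{2}\abs{\sin(\theta/2)}\cdot 2$ and Cauchy--Schwarz: $\int\abs{\sin(\theta/2)}\mathcal K_m\le(\int\sin^2(\theta/2)\mathcal K_m)^{1/2}=((1-\rho_1)/2)^{1/2}$. With $1-\rho_1=O(1/m^2)$ for Jackson damping, this yields an explicit bound $\le 6/m$ per coordinate. If the Jackson coefficients are normalized at degree $m$ rather than $m+1$, the constant may need a small adjustment; the claimed $12/m$ leaves room for that. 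A secondary point to verify is that the Chebyshev-series interchange is legitimate, which is immediate because the sums are finite and $\mu$ is supported on the square.
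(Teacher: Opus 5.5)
Your proposal is correct and follows the paper's proof step for step. You use the duality identity of \Cref{a29}, the telescoping split with the sup-norm contraction from \Cref{a27} and \Cref{a26}, and a per-coordinate univariate bound of $6/m$, where your Cauchy--Schwarz estimate $\pi\sqrt{(1-\rho_1)/2}$ reproduces the Rivlin bound the paper simply cites as \Cref{a23}, minus its extra $\ell/m$ term.

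One correction: the $\rho_j$ are not yours to choose but are fixed by \cref{a22}, and the paper explicitly distinguishes that kernel from Jackson's original Fej\'er-squared one. For these coefficients $\rho_1=\cos(\pi/(m+2))$ exactly, so your bound is at most $\pi^2/(2(m+2))<6/m$ per coordinate, and no adjustment of the constant is needed.
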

To understand $\mu_{\KPM}$, we first present the usual single-variable KPM formulation and discuss its generalization to multiple variables. The central object is the polynomial kernel $K_m$,
\begin{equation}\label{a18}
    K_m(x,r) \equiv \sum_{j=0}^m\rho_j \wt{T}_j(x) \wt{T}_j(r).
\end{equation}
One associates with \(K_m\) a linear operator (which we will also denote $K_m$) mapping bounded functions $f$ supported on \([-1, 1]\) to polynomials of degree at most \(m\). Application of this operator is performed via the formula
\begin{equation}\label{a19}
    (K_mf)(x) = \int_{-1}^1 \dfrac{K_m(x,r)f(r)}{\sqrt{1-r^2}}\wrt r.
\end{equation}
If $g(x)$ is a probability density function on the interval $[-1, 1]$ then the KPM approximation to $g(x)$ is the measure with density
\eq{\label{a20}\wt g(x)=
\frac1{\sqrt{1-x^2}}\int_{-1}^1K_m(x,r)g(r)\wrt r}
which can be interpreted as the composition of three operations on $g$: pointwise multiplication by $\sqrt{1-x^2}$,  application of the kernel $K_m$, and division by $\sqrt{1-x^2}$.
Importantly, this multiplication by the weight function $\sqrt{1-x^2}$ exactly cancels the weight function from the definition of the kernel application \cref{a19}. By expanding the kernel definition in the resulting approximation, one sees that $\wt g$ can be computed from just the knowledge of the Chebyshev moments of $g$, \(\int_{-1}^1T_j(r)g(r)\wrt r.\)

In order to ensure that $\wt g$ approximates $g$ and that $\wt g$ remains a probability density, the damping coefficients $\rho_j$ must be selected carefully.
One choice is to pick $\rho_j$ to be the optimum of the following linear program: minimize a quantity called the squared kernel resolution
\[Q(K_m) \equiv \int_{-1}^1 \int_{-1}^1 \dfrac{K_m(x,r)(x-r)^2}{\sqrt{1-x^2}\sqrt{1-r^2}}\wrt x \wrt r,\]
which rewards $K_m$ for being close to the identity operation, subject to
\begin{equation}\label{a21}
    \int_{-1}^1\dfrac{K_m(x,r)}{\sqrt{1-r^2}} \wrt r = 1, \quad K_m(x,r) \geq 0 \textrm{ for } x,r \in [-1, 1],
\end{equation}
which ensure $\wt g$ is nonnegative and integrates to 1, i.e. is a probability density function.
Those optimal values of $\rho_j$ are obtained in, e.g., \cite{b4,b27} and are given by
\begin{equation}\label{a22}
    \rho_j = \frac{(m+2-j) \cos\mleft(\dfrac{j\pi}{m+2}\mright) + \sin\pare{\dfrac{j \pi}{m+2}}\cot \mleft(\dfrac{\pi}{m+2}\mright)}{m+2}, \quad j = 0, 1, \cdots, m.
\end{equation}
A different derivation, which emphasizes trigonometric polynomials rather than algebraic polynomials, for these same coefficients was also obtained by Rivlin \cite{b28}.

KPM posits that $\wt g$ is close to $g$ in the earth mover's distance. To understand why, we summarize several results from \cite[Section 1.1.2]{b28}, which control how $K_m$ acts on Lipschitz test functions. When reading that reference, one should note that the coefficients in
\cite[Page 20]{b28} are not presented in the form of \Cref{a22}, but they are equivalent.\footnote{Make the substitutions $x = \cos \theta$ and $r = \cos \phi$ into our definition of $K_m$ and use the property of Chebyshev polynomials that $T_j(\cos \theta) = \cos (j \theta)$.}

\begin{lemma}\label{a23} Suppose $f$ is an $\ell$-Lipschitz function on $[-1, 1]$, and let the damping coefficients $\rho_0$, $\rho_1$, $\ldots$, $\rho_m$ be selected such that $K_m$ is positive and normalized. Then 
\begin{equation}
    \norm{f -K_mf}_{[-1, 1]} \leq \dfrac{\ell}{m}\left( 1+\dfrac{m \pi}{\sqrt{2}}\sqrt{1-\rho_1}\right).
\end{equation}
In particular, with the choice in \Cref{a22}, one obtains 
\eq{\label{a24}
    \norm{f - K_m f}_{[-1, 1]} \leq \dfrac{6\ell}{m}.}
\end{lemma}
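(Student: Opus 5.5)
We prove \Cref{a23} by passing to the trigonometric setting and running a Jackson-type argument. Throughout, write $x=\cos\theta$ and $r=\cos\phi$ with $\theta,\phi\in[0,\pi]$, and set $F(\theta)=f(\cos\theta)$. Extended evenly and $2\pi$-periodically, $F$ is $\ell$-Lipschitz in $\theta$, since $|\cos\theta-\cos\phi|\le|\theta-\phi|$.

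Using $T_j(\cos\theta)=\cos(j\theta)$ and $\wrt r/\sqrt{1-r^2}=\wrt\phi$, the operator becomes
\[
(K_mf)(\cos\theta)=\int_0^\pi K_m(\cos\theta,\cos\phi)F(\phi)\wrt\phi .
\]
Expanding $2\cos j\theta\cos j\phi=\cos j(\theta-\phi)+\cos j(\theta+\phi)$ and using evenness of $F$, this equals a convolution
\[
\int_{-\pi}^{\pi}k(\theta-\psi)F(\psi)\wrt\psi,
\qquad
k(t)=\frac1{2\pi}\Big(1+2\sum_{j\ge1}\rho_j\cos jt\Big).
\]
The hypotheses translate directly: positivity of $K_m$ on $[-1,1]^2$ gives $k\ge0$, and normalization gives $\int_{-\pi}^{\pi} k=1$ (equivalently $\rho_0=1$).

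\textbf{Main estimate.} Since $k\ge0$ integrates to one,
\[
|F(\theta)-(k*F)(\theta)|\le\int k(t)\,|F(\theta)-F(\theta-t)|\wrt t\le\ell\int_{-\pi}^{\pi}k(t)\,|t|\wrt t .
\]
Next use $|t|\le\pi|\sin(t/2)|$ on $[-\pi,\pi]$ and Cauchy–Schwarz against the probability measure $k\,\wrt t$:
\[
\int k|t|\le\pi\Big(\int k\sin^2(t/2)\Big)^{1/2}=\pi\Big(\tfrac12\int k(1-\cos t)\Big)^{1/2}=\frac{\pi}{\sqrt2}\sqrt{1-\rho_1},
\]
because $\int k(t)\cos t\,\wrt t=\rho_1$ by orthogonality. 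This gives $\norm{f-K_mf}\le\ell\,\frac{\pi}{\sqrt2}\sqrt{1-\rho_1}$, which is at most the stated bound, since the additional $\ell/m$ term is only slack. That slack is what would appear if one used a two-scale splitting in $|t|$; the Cauchy–Schwarz route above avoids it.

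\textbf{Explicit constant.} Substituting $j=1$ into \eqref{a22} gives
\[
\rho_1=\frac{(m+1)\cos\alpha+\sin\alpha\cot\alpha}{m+2}=\cos\alpha,\qquad \alpha=\frac{\pi}{m+2}.
\]
Then $1-\rho_1=2\sin^2(\alpha/2)\le\alpha^2/2$, so
\[
\frac{m\pi}{\sqrt2}\sqrt{1-\rho_1}\le\frac{m\pi\alpha}{2}=\frac{\pi^2m}{2(m+2)}<\frac{\pi^2}{2}<5 .
\]
Hence the bracket in the lemma is less than $6$, which yields \eqref{a24}.

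The main obstacle is the first step: verifying that the Chebyshev kernel with weight $1/\sqrt{1-r^2}$ really becomes a nonnegative, unit-mass convolution kernel on the circle, and in particular checking that the factors of $\pi$ and $\sqrt2$ coming from the normalized $\wt T_j$ make the Fourier coefficients of $k$ equal exactly $\rho_j$.
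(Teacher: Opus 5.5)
Your proof is correct and is essentially the Jackson-type argument from Rivlin that the paper cites for this lemma. You substitute $x=\cos\theta$ to turn $K_m$ into a nonnegative, unit-mass convolution kernel $k$ with Fourier coefficients $\rho_j$; nonnegativity does follow, since $K_m(\cos\theta,1)=2k(\theta)$. You then bound the error by $\ell\int k(t)|t|\,dt$ and control it with $|t|\le\pi|\sin(t/2)|$, Cauchy--Schwarz, and $\rho_1=\cos(\pi/(m+2))$.

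The only difference is that you apply the Lipschitz bound directly. The stated bound instead comes from the modulus-of-continuity step $\omega(f,|t|)\le(1+m|t|)\,\omega(f,1/m)$, which is needed for general continuous $f$ and produces the additive $\ell/m$; skipping it gives you the sharper estimate $\pi^2\ell/(2(m+2))$. Like the paper, you take as known that the coefficients in \eqref{a22} give a positive kernel.
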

\begin{remark}
    The constant $6$ in \cref{a24} may be improved, especially for large $m$; however, a lower bound on the constant of $\pi/2$ is known \cite[Theorem VI]{b29}. 
\end{remark}
\begin{remark}
    Results similar to \Cref{a23} of this kind are often called ``Jackson-type theorems,'' after Dunham Jackson, who showed that algebraic or trigonometric polynomials can be used to approximate Lipschitz functions with a sup-norm error of $O(1/m)$.
    Although the kernel $K_m$ that we use here arising from the choice \Cref{a22} is sometimes called ``Jackson's kernel'' in the literature, we note that Jackson's original construction as given in \cite{b29} is not exactly equivalent. Still, Jackson proves that the kernel he introduced in \cite{b29} obtains an analogous result. The kernel of \cite{b29} is also analyzed in \cite[Appendix C]{b17}. For practical purposes, differences between the kernels seem to be negligible. 
\end{remark}

\Cref{a23} directly allows one to control the earth mover's distance between the measures with densities $g$ and $\wt g$:
fix a 1-Lipschitz test function $f$ and observe
\eq{\label{a25}\int_{-1}^1f(x)\wt g(x)\wrt x
=
\int_{-1}^1\int_{-1}^1\frac{f(x)K_m(x,r)g(r)}{\sqrt{1-x^2}}\wrt r\wrt x
=\int_{-1}^1 (K_mf)(r)g(r)\wrt r.}
Since \Cref{a23} implies $f$ and $K_mf$ are close, this means that the integrations of $f$ against $g$ and $\wt g$ are close, which is precisely what is required to show that $g$ and $\wt g$ are close in the earth mover's distance.
From \eqref{a25} we can directly bound:
\begin{align*}
    \wass(g,\wt g) = \sup_{\norm{f}_{\text{Lip}} \le 1}  \int_{-1}^1 f(z)(g(z) - \wt g(z)) d x = \sup_{\norm{f}_{\text{Lip}} \le 1} \int_{-1}^1 (f(z) - K_m f(z))g(z) d x \le \frac{6}{m},
\end{align*}
where the last inequality follows from \eqref{a24}.

One final observation of single-variable KPM before generalizing to multiple variables is that $K_m$ is a contraction in the following sense. 
\begin{lemma}\label{a26}
Let \(K_m\) be the polynomial kernel defined by \Cref{a18}, \Cref{a19}, and \Cref{a22}. \(K_m\) has the following contraction property:
    \begin{equation}
        \norm{K_m f}_{[-1, 1]} \leq \norm{f}_{[-1,1]}.
    \end{equation}
\end{lemma}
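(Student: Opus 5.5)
The contraction property follows from the two constraints in \cref{a21}, which the choice \cref{a22} satisfies (this is how those coefficients were obtained). These constraints say that, for each fixed $x\in[-1,1]$, the function $r\mapsto K_m(x,r)/\sqrt{1-r^2}$ is a probability density on $[-1,1]$. Consequently $(K_mf)(x)$ is an average of $f$ against that density, and an average cannot exceed the supremum.

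The steps are as follows. First I fix $x\in[-1,1]$ and a bounded function $f$ on $[-1,1]$. From the definition \cref{a19} and the triangle inequality for integrals,
\[
\abs{(K_mf)(x)} \le \int_{-1}^1 \frac{\abs{K_m(x,r)}\,\abs{f(r)}}{\sqrt{1-r^2}}\wrt r .
\]
Next, positivity $K_m(x,r)\ge 0$ gives $\abs{K_m(x,r)}=K_m(x,r)$. Bounding $\abs{f(r)}\le\norm{f}_{[-1,1]}$ then yields
\[
\abs{(K_mf)(x)}\le \norm{f}_{[-1,1]}\int_{-1}^1\frac{K_m(x,r)}{\sqrt{1-r^2}}\wrt r .
\]
Finally, the normalization constraint in \cref{a21} says the remaining integral equals $1$. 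Taking the supremum over $x\in[-1,1]$ finishes the proof.

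The one step that needs care is justifying that the specific coefficients \cref{a22} make $K_m$ both nonnegative and normalized.

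Normalization is the easy part and can be checked directly. By orthonormality \cref{a11}, integrating $\wt T_j(r)/\sqrt{1-r^2}$ gives $0$ for $j\ge1$, and gives $\sqrt\pi$ for $j=0$. So the integral equals $\rho_0\,\wt T_0(x)\sqrt{\pi}=\rho_0$. From \cref{a22}, $\rho_0=(m+2)/(m+2)=1$.

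Nonnegativity is less trivial. I would cite it from \cite{b4,b27,b28}: these coefficients are the optimum of the linear program with constraints \cref{a21}, so they are feasible for it. If a self-contained argument is wanted, substitute $x=\cos\theta$ and $r=\cos\phi$. Then $K_m$ becomes a cosine sum in $\theta\pm\phi$, and Rivlin's derivation \cite{b28} writes it as a squared modulus of a trigonometric polynomial, which is nonnegative. I expect only this positivity fact to require appeal to the literature; the rest is immediate.
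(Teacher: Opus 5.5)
Your proposal is correct and takes the same approach as the paper: bound $|f|$ by its sup norm inside the integral \eqref{a19}, then use nonnegativity together with the normalization in \eqref{a21} to get $\int_{-1}^1 K_m(x,r)/\sqrt{1-r^2}\,\mathrm{d}r=1$. Your version is a bit more careful than the paper's, which writes the first step as an equality rather than a triangle-inequality bound and takes positivity and normalization of the coefficients \eqref{a22} for granted; your direct check that $\rho_0=1$ correctly settles the normalization.
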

\begin{proof}
Simply expand the definition to see that
    \begin{align*}
    \norm{K_m f}_{[-1,1]} & =\max_{x \in [-1, 1]}\int 
    \dfrac{|K_m(x,r)f(r)|}{\sqrt{1-r^2}} \wrt r  \\
    & \leq \max_{r \in[-1,1]} |f(y)| \max_{x \in[-1, 1]}\int_{-1}^1 \dfrac{K_m(x,r)}{\sqrt{1-r^2}} \wrt r \\
    & = \norm{f}_{[-1,1]}.
    \end{align*}
\end{proof}

The preceding discussion for the univariate KPM generalizes in a straightforward way to multivariate problems in \(d \geq 1\) variables on the hypercube \([-1,1]^d\). In the following we state multivariate analogues of \Cref{a23} and \cref{a25}, and we consequently bound the KPM approximation error in earth mover's distance in \Cref{a17}.\footnote{While multivariate extensions of Jackson's theorem have been known for a long time (c.f. \cite{b30,b31}), they do not seem to be well-known and we provide a simple proof of \Cref{a27} using known properties of the one-dimensional \(K_m\).}
Define for a multivariate function \(f(x_1, x_2, \ldots, x_d)\) on \([-1,1]^d\) the component Lipchitz constants 
\[
\mathrm{Lip}_j(f)
\equiv \sup_{\substack{x_k\in[-1,1]\\ k\neq j}}
    \ \sup_{\substack{u,v\in[-1,1]\\ u\neq v}}
    \dfrac{\bigl|f(x_1,\dots,x_{j-1},u,x_{j+1}\dots,x_d)-f(x_1,\dots,x_{j-1},v,x_{j+1},\dots,x_d)\bigr|}{|u-v|}.
\]
$\mathrm{Lip}_j(f)$ is the maximal Lipschitz constant of \(f\) when viewed as a function of \(x_j\) with all other variables fixed. 

The one-dimensional kernel $K_m$ defined by \cref{a19} extends to a $d$ dimensional analog via the tensor product, i.e., let \(\otimes^d K_m = K_m \otimes K_m \otimes \cdots \otimes K_m \) be the integral operator defined by applying the one-dimensional Jackson operator separately in each coordinate: 
\eq{\label{a28}(\otimes^d K_m f)(x_1, \cdots, x_d) = \int_{[-1,1]^d} f(y_1,\cdots, y_d)\prod_{j=1}^d\left( \dfrac{K_m(x_j, y_j)}{\sqrt{1-y_j^2}}\right) \wrt y_1 \cdots \wrt y_d.}
Then we have the following multivariate extension of \Cref{a23}.
\begin{theorem}[Multivariate Jackson]\label{a27}
    Let \(f(x_1, x_2, \cdots, x_d)\) be a continuous function on \([-1, 1]^d\) and suppose there exist positive constants \(\ell_1, \cdots, \ell_d\) such that \(\mathrm{Lip}_{j}(f) \leq \ell_j\) for \(j = 1, 2, \cdots, d\). Then 
    \begin{equation}
        \norm{f - (\otimes^d K_m)f}_{[-1,1]^d} \leq \frac{6}{m}\sum_{j=1}^d \ell_j,
    \end{equation}
\end{theorem}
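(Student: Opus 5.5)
We prove \Cref{a27} by telescoping over coordinates, reducing to the one-dimensional \Cref{a23} and the contraction property \Cref{a26}. Write $K^{(j)}$ for the one-dimensional operator $K_m$ acting only in the $j$-th variable, with all other variables held fixed as parameters. By Fubini (the kernel is bounded and the weights are integrable), $\otimes^d K_m = K^{(1)}K^{(2)}\cdots K^{(d)}$, and these operators commute. Setting $P_0 = I$ and $P_j = K^{(1)}\cdots K^{(j)}$, the telescoping identity is
\[
f - (\otimes^d K_m)f = \sum_{j=1}^d \pare{P_{j-1}f - P_j f} = \sum_{j=1}^d P_{j-1}\pare{f - K^{(j)}f}.
\]

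\textbf{Step 1: bound each term.} For each $j$, the function $g_j = f - K^{(j)}f$ satisfies $\norm{g_j}_{[-1,1]^d}\le 6\ell_j/m$. Indeed, for fixed values of the other coordinates, $f$ as a function of $x_j$ is $\ell_j$-Lipschitz, because $\mathrm{Lip}_j(f)\le\ell_j$. So \Cref{a23} applies pointwise in the remaining variables, and the bound is uniform in them.

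\textbf{Step 2: remove the prefix operators.} We need $\norm{P_{j-1}g_j}_{[-1,1]^d}\le\norm{g_j}_{[-1,1]^d}$. Each $K^{(i)}$ is a sup-norm contraction on functions of $d$ variables. This follows by applying \Cref{a26} in the variable $x_i$ for each fixed choice of the other coordinates; equivalently, it follows directly from the positivity and normalization \cref{a21} of the kernel, which make $K^{(i)}$ an averaging operator. Composing these contractions gives the claim, and the triangle inequality then yields $\frac{6}{m}\sum_j\ell_j$.

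\textbf{Main obstacle.} The delicate point is justifying that the one-dimensional results may be applied with the other variables as parameters. \Cref{a23} is stated for Lipschitz functions on $[-1,1]$, so we need the slices to be measurable and bounded. This holds because $f$ is continuous on a compact set, and continuity also guarantees that the iterated integrals defining $P_j f$ are well defined and that Fubini applies. It matters that we contract only with $P_{j-1}$, applied after forming the difference $f-K^{(j)}f$. This ordering avoids having to track how the Lipschitz constant in $x_j$ behaves under the operators $K^{(i)}$ with $i\ne j$. One could alternatively check that these operators preserve $\mathrm{Lip}_j$ (again by positivity and normalization), but the telescoping order above makes that unnecessary.
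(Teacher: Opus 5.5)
Your proposal is correct and follows essentially the same route as the paper. Both telescope $f-(\otimes^d K_m)f=\sum_{j=1}^d P_{j-1}\bigl(f-K^{(j)}f\bigr)$, bound each difference slice-wise by the one-dimensional Jackson estimate \Cref{a23}, and strip off the prefix operators using the contraction property \Cref{a26}; the paper writes out $d=2$ explicitly before sketching general $d$.
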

\begin{proof}
    We prove first \(d = 2\) for simplicity, and then describe the general proof afterward. We use \((I \otimes K_m)\) to indicate the linear integral operator that applies Jackson's one-dimensional kernel on the second variable:
    \[
        ((I \otimes K_m)f)(x,y) = \int_{-1}^1f(x, s) \dfrac{K_m(y,s)}{\sqrt{1-s^2}}\wrt s .
    \]
    The operator \((K_m \otimes I)\) is defined similarly as operating on the first variable. Under this notation, we may factor the iterated integrals via \((K_m \otimes K_m) = (I \otimes K_m)(K_m \otimes I) = (K_m \otimes I)(I \otimes K_m)\). Then observe
    \begin{align*}
        \norm{f - (K_m \otimes K_m)f}_{[-1,1]^2} & = \norm{f - (I \otimes K_m)f + (I \otimes K_m)(f - (K_m \otimes I)f)}_{[-1,1]^2} \\
        & \leq \norm{f - (I \otimes K_m)f}_{[-1,1]^2} + \norm{(I \otimes K_m)(f- (K_m \otimes I)f)}_{[-1,1]^2} \\
        & \leq \norm{f - (I \otimes K_m)f}_{[-1,1]^2} + \norm{f- (K_m \otimes I)f}_{[-1,1]^2} \\
        & \leq \dfrac{6 \ell_2}{m} + \dfrac{6 \ell_1}{m}.
    \end{align*}
    The first equality comes from adding and subtracting \((I \otimes K_m)f\). The inequality on the second line is an application of the triangle inequality. For the third line, let \(x\) be fixed and define the one-dimensional slice \(y \mapsto h_x(y) = (f - (K_m \otimes I)f)(x, y)\), so that we can apply \Cref{a26} by observing 
    \[
        \norm{(I \otimes K_m)(f- (K_m \otimes I)f)}_{[-1,1]^2} = \sup_{x \in [-1, 1]} \norm{K_mh_x} \leq \sup_{x \in [-1,1]} \norm{h_x}_{[-1,1]} = \norm{f - (K_m \otimes I)f}_{[-1,1]^{2}}.
    \]
    Finally, the fourth line is a consequence of applying the one-dimensional \Cref{a23} to the one-dimensional slices analogous to the argument applied to \(h_x\) above. More precisely, for fixed \(x\), observe that \(y \mapsto f_x(y) \equiv f(x,y)\) is an \(\ell_2\)-Lipschitz function of \(y\), so that 
    \[
        \norm{f - (I \otimes K_m)f}_{[-1,1]^2} = \sup_{x \in [-1,1]} \norm{f_x-K_mf_x}_{[-1,1]} \leq \dfrac{6\ell_2}{m}.
    \]
    The same argument may be applied to the analogously defined \(f_y\) to control the second term in the third line.
    
    The \(d > 2\) case follows from a similar argument. If we let \(P_j = (\otimes^{j}K_m)\otimes(\otimes^{d-j}I)\) be the operator which applies \(K_m\) in the first \(j\) coordinates, we apply the triangle inequality to 
    \[
    f-(\otimes^{d}K_m)f = \sum_{j=1}^{d} P_{j-1}f - P_{j}f,
    \]
    whose \(d\) terms lead to the factor of \(\sum_{j=1}^d \ell_j\) in the result. 
\end{proof}

\begin{remark}If \(f\) is \(\ell\)-Lipschitz with respect to any $p$-norm then \(\sum_j \mathrm{Lip}_j(f) \leq d \ell\).
\end{remark}

\begin{remark}
    Analogous results can be obtained by using a different kernel $K_m$ corresponding to a different choice of the $\rho_j$, such as Jackson's original construction \cite{b29}. 
\end{remark}

The calculation in \Cref{a25} immediately generalizes to the multivariate setting. For simplicity, we only state it for $d=2$ variables, which is all we need for complex measures.

\begin{lemma}\label{a29}
    Let \(\mu\) be a probability distribution on \([-1,1]^2\). Define further the approximation 
    \[
        \wrt \mu_\KPM(x, y) \equiv \dfrac{p_m(x,y)}{\sqrt{1-x^2} \sqrt{1-y^2}} \wrt x \wrt y
    \]
    where 
    \[
        p_m(x, y) = \sum_{i=0}^m \sum_{j=0}^m \rho_i \rho_j\Gamma_{ij} \wt{T}_i(x) \wt{T}_j(y), \quad \Gamma_{ij} = \int_{-1}^1 \int_{-1}^1 \wt{T}_i(x) \wt{T}_j(y) \wrt \mu(x,y).
    \]
    Then \(\mu_{\KPM}\) is a probability distribution and
    \[ 
        \int_{[-1,1]^2} f(x, y) \wrt \mu_{\KPM}(x, y)= \int_{[-1,1]^2}((K_m\otimes K_m)f)(x,y))\wrt \mu(x, y).
    \]
\end{lemma}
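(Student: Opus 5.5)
The plan is to expand everything into iterated integrals and use Fubini to move the kernel from one side to the other, as in the one-dimensional computation \cref{a25}. First rewrite $p_m$ in kernel form. Substituting the definition of $\Gamma_{ij}$ and exchanging the finite sums with the integral against $\mu$ gives
\[
p_m(x,y)=\int_{[-1,1]^2}\Bigl(\sum_{i=0}^m\rho_i\wt T_i(x)\wt T_i(r)\Bigr)\Bigl(\sum_{j=0}^m\rho_j\wt T_j(y)\wt T_j(s)\Bigr)\wrt\mu(r,s)=\int_{[-1,1]^2}K_m(x,r)K_m(y,s)\wrt\mu(r,s),
\]
using \cref{a18}. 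Nothing more than linearity is needed here.

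Next, for a bounded continuous $f$, I will integrate $f$ against $\mu_\KPM$ and apply Fubini/Tonelli to swap the $(x,y)$ integral with the $\mu(r,s)$ integral. This is justified because $\mu$ is a probability measure and the integrand is bounded by $\|f\|\,|K_m(x,r)K_m(y,s)|/\sqrt{1-x^2}\sqrt{1-y^2}$, which is integrable since $K_m$ is a bounded polynomial and $(1-x^2)^{-1/2}$ is integrable. The result is
\[
\int f\wrt\mu_\KPM=\int_{[-1,1]^2}\Bigl(\int_{[-1,1]^2}f(x,y)\frac{K_m(x,r)K_m(y,s)}{\sqrt{1-x^2}\sqrt{1-y^2}}\wrt x\wrt y\Bigr)\wrt\mu(r,s).
\]
Since $K_m$ is symmetric in its two arguments, the inner integral is exactly $((K_m\otimes K_m)f)(r,s)$ by \cref{a28}, with the roles of the variables renamed. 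This gives the claimed identity.

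For the claim that $\mu_\KPM$ is a probability distribution, there are two properties to check. Nonnegativity of the density follows because the kernel form above exhibits $p_m$ as an integral of $K_m(x,r)K_m(y,s)\ge0$ against a positive measure, using the positivity constraint \cref{a21} satisfied by the damping coefficients \cref{a22}. Total mass $1$ follows by taking $f\equiv1$ in the identity just proved: by the normalization in \cref{a21} and symmetry of $K_m$, we get $(K_m\otimes K_m)1\equiv1$, so $\int\wrt\mu_\KPM=\mu([-1,1]^2)=1$.

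The only delicate step is relying on the positivity and normalization of Jackson's kernel with the coefficients \cref{a22}. I will take these as given from the cited references, as \Cref{a23} and \Cref{a26} already do. The interchange of integrals is routine once integrability of the Chebyshev weight is noted.
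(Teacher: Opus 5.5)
Your proposal is correct and takes essentially the same route as the paper, which supports this lemma only by remarking that the one-dimensional computation \cref{a25} generalizes: write $p_m(x,y)=\int K_m(x,r)K_m(y,s)\wrt\mu(r,s)$, exchange the order of integration, and recognize $(K_m\otimes K_m)f$ using the symmetry of $K_m$. Your explicit treatment of nonnegativity (from positivity of the Jackson kernel) and of unit mass (from $(K_m\otimes K_m)1\equiv 1$, equivalently $\rho_0=1$) fills in details the paper leaves implicit.
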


\begin{remark}
    We observe that \Cref{a29} shows that the ansatz \cref{a15} constructs a measure $\mu_{\KPM}$ whose action on a test function $f$ reproduces the action of the true measure $\mu$ on the test function after polynomial smoothing, $K_mf$. This is independent of the choice of the $\rho_j$.  
\end{remark}
We may now prove \Cref{a17}.
\begin{proof}[Proof of \Cref{a17}]
   Let $\mathcal{F}$ denote the class of 1-Lipschitz functions with respect to the Euclidean norm on $[-1, 1]^2$. Then applying \Cref{a29} and \Cref{a27} we obtain
    \begin{align*}
        \wass(\mu, \mu_{\KPM}) & = \sup_{f \in \mathcal{F}} \left( \int_{-1}^1\int_{-1}^1f(x,y)\wrt \mu(x,y) - \int_{-1}^1\int_{-1}^1f(x,y)\wrt \mu_{\KPM}(x,y)\right) \\
        & = \sup_{f \in \mathcal{F}} \left( \int_{-1}^1\int_{-1}^1\left(f - (K_m \otimes K_m)f\right)(x,y)\wrt \mu(x,y)\right) \\
        & \leq \dfrac{12}{m}.
    \end{align*}
\end{proof}

To be explicit about the algorithmic importance of \Cref{a17}, it shows that one may compute an approximation to a measure with just knowledge of its Chebyshev moments. The previous section showed how to compute those moments of $\mu_{A,b}$, and so we have an algorithm, \Cref{a16}, for computing an approximation $\widehat\mu_{A,b}$ of it.
\begin{algorithm}[H]
    \caption{\textsc{WSpecDens\((A, b, m)\)} \\
    Estimates weighted spectral density \(\widehat \mu_{A,b}\) using \Cref{a14} and Jackson's coefficients \Cref{a22}.}\label{a16}
    \begin{algorithmic}[1] %
        \Require $A\in\C^{n\times n}$, $\magn A\le1$, $b\in\C^n$, $\norm b=1$.
        \Ensure $\widehat \mu_{A,b}$ approximating $\mu_{A,b}$ with guarantee from \Cref{a17}.
        \State $\Gamma \gets \textsc{ChebMoments}(A, b, m)$
        \State $p_{\KPM}(x, y) \gets \sum_{j=0}^m\sum_{k=0}^m \rho_j \rho_k \Gamma_{jk}\wt{T}_j(x)\wt{T}_k(y)$
        \State \Return $\dfrac{p_{\KPM}(x,y)}{\sqrt{1-x^2}\sqrt{1-y^2}}\wrt x\wrt y$.
    \end{algorithmic}
\end{algorithm}

\subsection{Selection of weight vector.}
For any fixed vector $b$, the previous two subsections show how to compute an approximation of the weighted spectral density $\mu_{A,b}$. Our goal, however, is to approximate the unweighted spectral density $\mu_A$. Observe that if $b$ is a random vector sampled from an isotropic distribution, then
\[\E \mu_{A,b}=\mu_A.\]
This section argues that $\mu_{A,b}$ exhibits sufficient concentration when $b$ is sampled uniformly at random from the unit sphere. The final algorithm will then be \Cref{a2}.
\begin{algorithm}[H]
    \caption{\textsc{SpecDens\((A, m)\)} \\
    Estimates spectral density \(\widehat \mu_{A}\) using \Cref{a16}.}\label{a2}
    \begin{algorithmic}[1] %
        \Require $A\in\C^{n\times n}$, $\magn A\le1$.
        \Ensure $\widehat \mu_{A}$ approximating $\mu_{A}$ with guarantee from \Cref{a1}.
        \State Sample \(b \sim \Unif(\sphere{2n-1})\)
        \State \Return $\textsc{WSpecDens}(A,b,m)$.
    \end{algorithmic}
\end{algorithm}

Our analysis is in two parts, both appealing to known results. Claim \textbf{I} is that the Chebyshev moments of $\mu_{A,b}$ are highly concentrated around the corresponding moments of $\mu_A$. Claim \textbf{II} is that this suffices to imply $\mu_{A,b}$ is close to $\mu_A$ in earth mover's distance with high probability.

Claim \textbf{I} is a consequence of the following concentration result for Lipschitz functions on the sphere. We identify the unit sphere living in $\C^n$ with the unit sphere $\sphere{2n-1}$ living in $\R^{2n}$.
\begin{lemma}[{\cite[Theorem 10.3.1]{b32}}]\label{a30}
    Suppose \(g:\sphere{2n-1} \to \R\) has $\ell_2$ Lipschitz constant bounded by $L$. Let \(b \sim \Unif(\sphere{2n-1})\). Then 
    \[
        \Pr\pare{\abs{g(b) - \E[g(b)]} \geq r} \le 2e^{-(2n-1)r^2/(2L^2)}.
    \]
\end{lemma}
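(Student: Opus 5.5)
The plan is to use the logarithmic Sobolev inequality on the sphere together with the Herbst argument. This route naturally produces the exponent $(2n-1)r^2/(2L^2)$, since $2n-1$ is the intrinsic dimension of $\sphere{2n-1}\subset\R^{2n}$.

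\textbf{Step 1: reduce to the geodesic setting.} Write $d=2n-1$ and let $\sigma$ be the uniform probability measure on $\sphere{d}$. Geodesic distance on the sphere dominates chordal (Euclidean) distance. So an $\ell_2$-Lipschitz bound $L$ for $g$ implies that $g$ is $L$-Lipschitz for the Riemannian metric. By Rademacher's theorem, $g$ is then differentiable $\sigma$-a.e. with $|\nabla_{\sphere{d}} g|\le L$. By a standard mollification on the sphere (convolution with a smooth zonal approximate identity), we may assume $g$ is smooth with $|\nabla g|\le L$ everywhere. Both the expectation and the tail probabilities pass to the limit, since the tails can be taken at $r-\eta$ and then $\eta\to0$.

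\textbf{Step 2: invoke the sharp log-Sobolev inequality.} The inequality we need is
\[
\operatorname{Ent}_\sigma(h^2)\le \frac{2}{d}\int_{\sphere d}|\nabla h|^2\wrt\sigma,
\]
which holds with the sharp constant $d$ (Mueller--Weissler; Rothaus). This is the step I expect to be the main obstacle. The Bakry--\'Emery criterion alone only gives curvature $\mathrm{Ric}=d-1$, and hence constant $d-1$, which is exactly the weaker $(2n-2)$ exponent. I would therefore cite the sharp result directly. Alternatively, one can obtain constant $d$ by the Bakry--\'Emery $\Gamma_2$ computation in the version with dimension term: the curvature-dimension condition $CD(d-1,d)$ on $\sphere d$ yields the log-Sobolev constant $\frac{(d-1)d}{d-1}=d$.

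\textbf{Step 3: Herbst argument.} Apply the inequality to $h=e^{\lambda g/2}$, so that $|\nabla h|^2=\frac{\lambda^2}{4}|\nabla g|^2 e^{\lambda g}\le \frac{\lambda^2L^2}{4}e^{\lambda g}$. Set $H(\lambda)=\E e^{\lambda g}$. The inequality becomes
\[
\lambda H'(\lambda)-H(\lambda)\log H(\lambda)\le \frac{\lambda^2L^2}{2d}H(\lambda).
\]
Equivalently, $\frac{\wrt}{\wrt\lambda}\big(\lambda^{-1}\log H(\lambda)\big)\le \frac{L^2}{2d}$. Since $\lambda^{-1}\log H(\lambda)\to\E g$ as $\lambda\to0$, integrating gives
\[
\E e^{\lambda(g-\E g)}\le e^{\lambda^2L^2/(2d)}\quad\text{for all }\lambda>0.
\]

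\textbf{Step 4: tail bound.} Chebyshev's exponential inequality with the optimal choice $\lambda=dr/L^2$ gives $\Pr(g-\E g\ge r)\le e^{-dr^2/(2L^2)}$. Applying the same bound to $-g$, which has the same Lipschitz constant, and taking a union bound yields
\[
\Pr\big(|g(b)-\E g(b)|\ge r\big)\le 2e^{-(2n-1)r^2/(2L^2)},
\]
as stated in \Cref{a30}.
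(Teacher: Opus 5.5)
Your argument is correct, and the constants match exactly. The sharp log-Sobolev constant $2n-1$ of $\sphere{2n-1}$, fed through the Herbst argument, gives $\E e^{\lambda(g-\E g)}\le e^{\lambda^2L^2/(2(2n-1))}$ and hence the stated two-sided tail.

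The paper does not prove \Cref{a30}; it imports it directly from \cite[Theorem 10.3.1]{b32}. So your write-up is a self-contained reconstruction of the standard mechanism behind such spherical concentration bounds, not a competing route. What it adds is transparency about where the exponent $2n-1$ comes from, rather than the value $2n-2$ that Bakry--\'Emery with Ricci curvature alone would give. You identified that distinction correctly. The trade-off is that the citation burden moves to the sharp spherical log-Sobolev inequality rather than disappearing.

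Two small remarks:
\begin{itemize}
\item Your $CD(d-1,d)$ alternative in Step 2 degenerates when $n=1$: there $d=1$, the curvature is $0$, and $\frac{(d-1)d}{d-1}$ is $0/0$. For the circle you must cite separately the classical fact that its log-Sobolev constant is $1$.
\item The mollification in Step 1 is optional. The log-Sobolev inequality extends by density to Lipschitz functions with $|\nabla g|\le L$ almost everywhere, and differentiating $H(\lambda)$ under the integral is harmless because $g$ is bounded.
\end{itemize}
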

 
 We can use this result to bound the fluctuations in the Chebyshev moments of \(\mu_{A,b}\) around its expectation \(\mu_A\).

\begin{proposition}\label{a31}
    Let \(\Gamma_{jk}\) and \(\Gamma_{jk}'\) be the mixed Chebyshev moments of \(\mu_A\) and \(\mu_{A,b}\), respectively, for \(0 \leq j,k \leq m\). Let \(b \sim \Unif(\sphere{2n-1})\). Then for \(0 < \delta < 1\),
    \[
        \P\left(\max_{(j,k) \neq 0}|\Gamma_{jk}-\Gamma_{jk}'| \geq \sqrt{\dfrac{8}{2n-1} \log\left(\dfrac{2(m+1)^2-2}{\delta}\right)} \right) \leq \delta.
    \]
\end{proposition}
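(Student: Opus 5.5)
We will apply \Cref{a30} to each moment separately and then take a union bound over the $(m+1)^2-1$ nonzero index pairs. Fix $(j,k)\neq(0,0)$ and define $g_{jk}(b) = \Gamma'_{jk} = b^*\wt T_j(H)\wt T_k(K) b$, which is the identity of \Cref{a12} with $p=\wt T_j$, $q=\wt T_k$ (both real). Because $H$ and $K$ are commuting Hermitian matrices, the matrix $M_{jk}=\wt T_j(H)\wt T_k(K)$ is Hermitian, so $g_{jk}$ is a real-valued function on $\sphere{2n-1}$. Its mean is $\E g_{jk}(b)=\tr(M_{jk})/n=\Gamma_{jk}$, because $\E bb^*=I/n$ for $b$ uniform on the complex unit sphere, and $\frac1n\tr M_{jk}=\frac1n\sum_\ell \wt T_j(\Re\lambda_\ell)\wt T_k(\Im\lambda_\ell)$, which is the mixed moment of $\mu_A$.

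Next we bound the Lipschitz constant. For unit vectors $b,c$ we have
\[
|b^*Mb-c^*Mc| \le |b^*M(b-c)|+|(b-c)^*Mc| \le 2\|M\|\,\|b-c\|.
\]
The complex Euclidean norm coincides with the $\ell_2$ norm on $\R^{2n}$, so $L\le 2\|M_{jk}\|$. Since $\|A\|\le1$, the eigenvalues of $H$ and $K$ lie in $[-1,1]$, where $|T_j|\le1$. Hence $\|\wt T_j(H)\|\le\sqrt{2/\pi}$ for $j\ge1$ and $1/\sqrt\pi$ for $j=0$, so $\|M_{jk}\|\le 2/\pi<1$ and $L\le 4/\pi<2$.

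Applying \Cref{a30} with $L=2$ gives
\[
\P(|\Gamma'_{jk}-\Gamma_{jk}|\ge r)\le 2e^{-(2n-1)r^2/8}.
\]
A union bound over the $(m+1)^2-1$ pairs yields total failure probability at most $(2(m+1)^2-2)e^{-(2n-1)r^2/8}$. Setting this equal to $\delta$ gives exactly $r=\sqrt{\frac{8}{2n-1}\log\frac{2(m+1)^2-2}{\delta}}$, which is the claimed bound. The pair $(0,0)$ is excluded because $\Gamma'_{00}=\|b\|^2/\pi=\Gamma_{00}$ deterministically.

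None of these steps is difficult. The only points needing care are verifying that $g_{jk}$ is real-valued, which uses commutativity of $H$ and $K$ from normality, and computing the Lipschitz constant so that the crude bound $L\le2$ reproduces the constant $8$ in the statement.
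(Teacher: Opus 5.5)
Your proposal is correct and matches the paper's proof essentially step for step: the Lipschitz bound $2\|f(A)\|$ for $b\mapsto b^*f(A)b$, the concentration inequality of \Cref{a30} with $\|f_{jk}(A)\|\le 1$, and a union bound over the $(m+1)^2-1$ nonzero index pairs. You also make explicit two details the paper leaves implicit: that $g_{jk}$ is real-valued (since $H$ and $K$ commute), and that $\Gamma'_{00}=\Gamma_{00}$ holds deterministically.
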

\begin{proof}
    For any \(f : \C \to \R\), and \(b \sim \Unif(\sphere{2n-1})\), we have 
    \begin{equation*}
        \int f(z) \wrt \mu_{A}(z) = \dfrac{1}{n}\tr(f(A)), \quad \int f(z) \wrt \mu_{A,b}(z) = b^*f(A)b.
    \end{equation*}
    Moreover, $\E bb^*=\frac1nI$ so
    \[
        \E [b^*f(A)b] = \E [\tr(f(A)bb^*)]
        = \dfrac{1}{n}\tr(f(A)).
    \]
    Observe also that \(g(b)\) defined by \(b \mapsto g(b)= b^*f(A)b\in\R\) is a Lipschitz function with Lipschitz constant bounded by \(2 \norm{f(A)}_2\) since
    \[
        |b^*f(A)b-c^*f(A)c| = |b^*f(A)(b-c)-(c-b)^*f(A)c| \leq 2 \norm{f(A)}\norm{b-c}.
    \]
    Thus, according to \Cref{a30}, 
    \begin{equation}\label{a32}
        \P\left(\left|b^*f(A)b - \dfrac{1}{n}\tr(f(A))\right| \geq t\right) \leq 2 e^{-(2n-1)t^2/(8 \norm{f(A)}^2)}.
    \end{equation}
    Let us now specialize to the case \(f_{ij}(z) = \wt{T}_{i}(x)\wt{T}_{j}(y)\) so that 
    \[
        \Gamma_{ij}-\Gamma_{ij}' = \dfrac{1}{n}\tr(f_{ij}(A))-b^*f_{ij}(A)b,
    \]
    to which we may apply \Cref{a32} with a union bound over the \((m+1)^2-1\) polynomials. Note also that \(\norm{f_{ij}(A)} \leq 1\), so for  each \(0 < \delta < 1\) we may set $t=\sqrt{\dfrac{8}{2n-1} \log\left(\dfrac{2(m+1)^2-2}{\delta}\right)}$ for the final result.
\end{proof}

Claim \textbf{II} is proven in \cite{b19}. They show that if two measures have close Chebyshev moments, they must be close in earth mover's distance. Although the result is stated for $d$-dimensions in \cite{b19}, we state it only for 2-dimensions here. 
\begin{lemma}[{\cite[Theorem 32]{b19}}]\label{a33}
    Suppose \(\mu, \nu\) are probability distributions on \([-1, 1]^2\) with normalized Chebyshev moments \(\Gamma_{kj}\) and \(\Gamma_{kj}'\), respectively. If the distributions' normalized Chebyshev moments satisfy the bound 
    \begin{equation}\label{a34}
        \sum_{\substack{0 \leq k,j \leq m \\ (k,j) \neq (0, 0)}}\dfrac{1}{k^2+j^2} (\Gamma_{kj}-\Gamma_{kj}')^2 \leq K^2,
    \end{equation}
    for some constant $K > 0$,
    then one may bound the earth mover's distance 
    \[
        \wass(\mu, \nu) \leq \dfrac{4C}{m} + \pi K,
    \]
    where \(\pi/2 \leq C \leq 3\).\footnote{We have modified the result of \cite{b19} by using \(\pi/2 \leq C \leq 3\), as derived by Jackson in \cite{b29}, rather than the number 18 cited in Fact 12 of \cite{b19}. This improves the constant in \cite{b19} in the univariate case from \(c \leq36\) to \( c \leq 6\). A similar gain may be obtained by replacing the 18 in Fact 3.2 of \cite{b17}, since both of these works rely on Jackson's construction in \cite{b29}.}
\end{lemma}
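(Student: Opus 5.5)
The plan is to prove the statement by Kantorovich--Rubinstein duality, splitting an arbitrary test function into a low-degree polynomial part and a remainder.

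\emph{Reduction to low-degree polynomials.} Fix a $1$-Lipschitz $f$ on $[-1,1]^2$. By \Cref{a27} (with $\ell_1=\ell_2=1$), $\magn{f-(K_m\otimes K_m)f}_{[-1,1]^2}\le 12/m$, and this smoothing is what produces the $4C/m$ term (with $C$ the Jackson constant). Since both $\mu$ and $\nu$ are probability measures, replacing $f$ by $P=(K_m\otimes K_m)f$ costs at most twice the sup-norm error when integrating against $\mu-\nu$. Next expand $P(x,y)=\sum_{k,j\le m}\rho_k\rho_j c_{kj}\wt T_k(x)\wt T_j(y)$, where $c_{kj}=\int f\,\wt T_k\wt T_j\,w$ are the Chebyshev coefficients of $f$ in the orthonormal basis with weight $w=(1-x^2)^{-1/2}(1-y^2)^{-1/2}$. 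Then
\[
\int P\,\wrt(\mu-\nu)=\sum_{(k,j)\neq(0,0)}\rho_k\rho_j c_{kj}(\Gamma_{kj}-\Gamma'_{kj}),
\]
where the $(0,0)$ term vanishes because both measures have mass one.

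\emph{Cauchy--Schwarz with weights.} Write the sum above as $\sum \frac{\Gamma_{kj}-\Gamma'_{kj}}{\sqrt{k^2+j^2}}\cdot\sqrt{k^2+j^2}\,\rho_k\rho_j c_{kj}$. By \eqref{a34} it is at most $K\bigl(\sum (k^2+j^2)c_{kj}^2\bigr)^{1/2}$, using $|\rho_k|\le1$. So the remaining task is to show $\sum_{k,j}(k^2+j^2)c_{kj}^2\le \pi^2$ for $1$-Lipschitz $f$. Substitute $x=\cos\theta$, $y=\cos\phi$, and set $g(\theta,\phi)=f(\cos\theta,\cos\phi)$, which is even and $2\pi$-periodic in each variable. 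Then $c_{kj}$ are, up to the normalizations $\sqrt{2/\pi}$ and $1/\sqrt\pi$, the cosine Fourier coefficients of $g$, and Parseval gives
\[
\sum(k^2+j^2)c_{kj}^2 \lesssim \int_0^\pi\!\!\int_0^\pi \bigl(|\partial_\theta g|^2+|\partial_\phi g|^2\bigr)\wrt\theta\wrt\phi .
\]
Since $|\partial_\theta g|=|\sin\theta\,\partial_x f|\le 1$, and similarly for $\phi$, the integral is at most $2\pi^2$. The constant bookkeeping (the factors $2/\pi$ against the measure $\pi^2$ of the square) has to be tracked so that the final product comes out to exactly $\pi K$; this is the step I expect to need the most care. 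Rademacher's theorem justifies the derivatives, since $f$ is Lipschitz and hence a.e.\ differentiable with $\nabla f\in L^2$.

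\emph{Conclusion.} Combining the two steps gives, for every $1$-Lipschitz $f$,
\[
\int f\,\wrt(\mu-\nu)\le 2\cdot\tfrac{2C}{m}+\pi K ,
\]
where $2C/m$ is the per-coordinate Jackson error (at most $6/m$ by \Cref{a23}). Taking the supremum over $f$ yields $\wass(\mu,\nu)\le 4C/m+\pi K$.

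The main obstacle is the weighted Parseval/Sobolev estimate. One needs the Chebyshev weight to turn exactly into Lebesgue measure in the angle variables, and the factor $\sin\theta$ to only help. The remaining risk is the constant normalization between the $\wt T_0$ and $\wt T_k$ terms; I would handle it by bounding crudely with the larger normalization $2/\pi$ throughout.
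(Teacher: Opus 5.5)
The paper gives no proof of this lemma; it imports \cite[Theorem 32]{b19} and swaps in Jackson's constant. So your argument has to stand on its own. Its architecture is sound: smooth by a positive tensor-product kernel, let the $(0,0)$ term cancel, apply weighted Cauchy--Schwarz against \eqref{a34}, and use Parseval in the angle variables. As written, though, it proves only $24/m+\sqrt2\,\pi K$, not the stated bound, because of two constant-level errors.

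\textbf{The Sobolev step.} Bounding $\abs{\partial_\theta g}\le1$ and $\abs{\partial_\phi g}\le1$ separately gives $2\pi^2$, hence only $\sqrt2\,\pi K$. This $\sqrt2$ is not a normalization artifact, and falling back on the larger normalization $2/\pi$ cannot remove it.

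In fact the Parseval step is an exact identity. Both $\{\wt T_k(\cos\theta)\}_{k\ge0}$ and $\{\sqrt{2/\pi}\sin k\theta\}_{k\ge1}$ are orthonormal bases of $L^2[0,\pi]$. Integration by parts shows that the coefficient of $\partial_\theta g$ on $\sqrt{2/\pi}\sin k\theta\,\wt T_j(\cos\phi)$ is $-kc_{kj}$; the boundary terms vanish since $\sin k\theta=0$ at $0$ and $\pi$. Hence
\[
\sum_{k,j}(k^2+j^2)c_{kj}^2=\int_0^\pi\!\int_0^\pi\abs{\nabla g}^2\wrt\theta\wrt\phi ,
\]
and the $\wt T_0$ normalization only ever enters with weight $k^2=0$.

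The missing idea is to use the Euclidean Lipschitz bound jointly rather than coordinatewise. Since $\abs{\cos a-\cos b}\le\abs{a-b}$, $g$ is itself Euclidean $1$-Lipschitz on $[0,\pi]^2$. Equivalently, $\sin^2\theta\,f_x^2+\sin^2\phi\,f_y^2\le f_x^2+f_y^2\le1$ a.e., so $\abs{\nabla g}\le1$ a.e. This gives exactly $\pi^2$, and hence $\pi K$.

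\textbf{The Jackson step.} Your bookkeeping drops a factor of $2$. The error term is $2\magn{f-P}_{[-1,1]^2}$, one factor for each measure. By \Cref{a27}, $\magn{f-P}$ is the sum of two one-dimensional errors, each at most $6/m$ by \Cref{a23}. So your own first paragraph gives $24/m$, which is $4C/m$ with $C=6$, not $C\le3$. Writing ``$2\cdot\frac{2C}{m}$ with $2C/m$ the per-coordinate error'' counts only one of the two doublings. In the statement $C$ is the one-dimensional constant, as the footnote's univariate $2C\le6$ confirms.

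To get $4C/m$ with $C\le3$, replace the kernel defined by \eqref{a22} with a positive, normalized kernel whose one-dimensional Lipschitz error is at most $C/m$ with $C\le3$; the footnote credits such a construction to Jackson \cite{b29}. Positivity supplies two things you need:
\begin{itemize}
\item the sup-norm contraction used in the proof of \Cref{a27};
\item the bound $\abs{\rho_k}\le\rho_0=1$ used in your Cauchy--Schwarz step. You asserted this without justification; for \eqref{a22} it follows from the positivity in \eqref{a21}.
\end{itemize}
The rest of your argument then goes through verbatim and yields $2(C/m+C/m)=4C/m$. With these two repairs the proof is complete.
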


In order to combine \Cref{a31,a33} to imply a bound on $\wass(\mu_A,\mu_{A,b})$, we must bound the sum over the terms of the form \((k^2+j^2)^{-1}\). For this, we introduce the following lemma. 
\begin{lemma}\label{a35}
    One may bound 
    \[
        \sum_{\substack{0 \leq k,j \leq m \\ (k,j) \neq (0, 0)}}\dfrac{1}{k^2+j^2} \leq 5 + \frac{\pi}{2} \log m.
    \]
\end{lemma}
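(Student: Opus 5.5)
We split the index set $\{0,\dots,m\}^2\setminus\{(0,0)\}$ into the two coordinate axes and the interior. On the axes, i.e.\ when exactly one of $k,j$ is zero, the terms are $1/k^2$ or $1/j^2$. Their contribution is therefore at most
\[
2\sum_{k=1}^m \frac1{k^2}\le 2\cdot\frac{\pi^2}{6}=\frac{\pi^2}{3}\approx 3.29 .
\]

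For the interior terms $1\le k,j\le m$, we compare the sum with an integral over a quarter annulus. The first step is to treat the diagonal-ish corner term $(1,1)$ separately; it contributes $1/2$. For every other lattice point $(k,j)$ with $1\le k,j\le m$, the unit square $[k-1,k]\times[j-1,j]$ lies in the closed first quadrant. On that square the function $1/(x^2+y^2)$ is at least $1/(k^2+j^2)$, because $x\le k$ and $y\le j$ there. Hence each such term is bounded by the integral of $1/(x^2+y^2)$ over its square.

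The trouble is that the square for $(1,1)$ touches the origin, where the integral diverges, which is why that term was removed. Squares with $k=1$ or $j=1$ (but not both) also reach the axes. Those are harmless, since $1/(x^2+y^2)$ is integrable near the axes away from the origin. However, those squares come within distance $1$ of the origin, so we need a region that excludes a neighbourhood of $0$.

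A cleaner route avoids this: bound the interior terms with $(k,j)\ne(1,1)$ by the integral over the region $\{1\le \sqrt{x^2+y^2}\le \sqrt2\, m,\ x,y\ge0\}$ together with a small correction. I would do this as follows.

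1. Observe that the squares $[k-1,k]\times[j-1,j]$ with $\max(k,j)\ge2$ all lie in $\{r\ge 1\}$, with the possible exception of the part of the squares for $(1,2)$ and $(2,1)$ that meets $\{r<1\}$.
2. For $(1,2)$ and $(2,1)$, just bound the terms directly by $1/5$ each.
3. All remaining squares have $\max(k,j)\ge 2$ and $\min(k,j)\ge1$. Every point $(x,y)$ in such a square satisfies $x^2+y^2\ge (\max(k,j)-1)^2\ge1$, so these squares sit inside the quarter annulus.
4. Evaluate the integral over the quarter annulus in polar coordinates:
\[
\int_0^{\pi/2}\int_1^{\sqrt2 m}\frac{r\,dr\,d\theta}{r^2}=\frac\pi2\log(\sqrt2 m)=\frac\pi2\log m+\frac{\pi}{4}\log 2 .
\]

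Collecting the pieces gives the total
\[
\frac{\pi^2}{3}+\frac12+\frac25+\frac\pi4\log2+\frac\pi2\log m\approx 3.29+0.5+0.4+0.54+\frac\pi2\log m<5+\frac\pi2\log m .
\]
This is the claimed bound.

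The main obstacle is the bookkeeping near the origin: making sure every square being compared lies in the region $r\ge1$ and on which the integrand dominates the term, and checking that the resulting constants actually stay below $5$. If the split in steps 1–3 leaves a gap, the fallback is to handle all terms with $k+j\le 3$ explicitly, at a cost of about $0.9$. One then compares only squares with $k+j\ge4$, which clearly avoid the unit disk, and the margin remains sufficient.
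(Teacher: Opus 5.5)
Your argument is correct, but it handles the interior sum $\sum_{k,j=1}^m (k^2+j^2)^{-1}$ differently from the paper; the axis terms are treated the same way, via $2\zeta(2)=\pi^2/3$.

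The paper works row by row. For fixed $k\ge 1$ the map $x\mapsto 1/(k^2+x^2)$ is decreasing on $[0,\infty)$, so $\sum_{j=1}^m (k^2+j^2)^{-1}\le\int_0^\infty (k^2+x^2)^{-1}\,dx=\pi/(2k)$. The harmonic bound $\sum_{k\le m}1/k\le 1+\log m$ then gives $\frac{\pi}{2}(1+\log m)$. Each one-dimensional integrand is bounded, so the singularity at the origin never has to be dealt with; it is absorbed into the leading $1$ of the harmonic sum.

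You instead compare the whole two-dimensional sum with $\iint (x^2+y^2)^{-1}$ over a quarter annulus in polar coordinates. This forces you to excise a neighbourhood of the origin by hand, but it gives a slightly smaller constant: about $4.73$, versus the paper's $\pi^2/3+\pi/2\approx 4.86$.

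Your step 2 is unnecessary. The squares $[0,1]\times[1,2]$ and $[1,2]\times[0,1]$ attached to $(1,2)$ and $(2,1)$ already lie in $\{r\ge 1\}$. Your own inequality in step 3, $x^2+y^2\ge(\max(k,j)-1)^2\ge 1$, applies to them, so the ``possible exception'' in step 1 is vacuous. Including these two squares in the annulus comparison lowers the constant to $\pi^2/3+\tfrac12+\tfrac{\pi}{4}\log 2\approx 4.33$. Either way the estimate holds for all $m\ge 1$, the same range the paper implicitly assumes.
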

\begin{proof}
We have
\[
    \sum_{\substack{0 \leq k,j \leq m \\ (k,j) \neq (0, 0)}}\dfrac{1}{k^2+j^2} =2 \sum_{k=1}^m\dfrac{1}{k^2} + \sum_{i=1}^m \sum_{j=1}^m \dfrac{1}{i^2+j^2}.
\]
The first term is bounded by $2\zeta(2)=\pi^2/3$ and the second term is bounded by
\begin{align*}
    \sum_{k=1}^m \sum_{j=1}^m \dfrac{1}{k^2+j^2} \le \sum_{k=1}^m \int_{0}^\infty \frac{1}{k^2+x^2} d x = \sum_{k=1}^m \frac{\pi}{2k} \le \frac{\pi}{2}(1+\log m).
\end{align*}
Lastly, $\pi^2/3+\pi/2<5$.
\end{proof}

\subsection{Proof of \secthm{a1}}
We now collect the results of the previous two subsections into a proof of our main result \Cref{a1}, which bounds the error in \Cref{a2}.

\begin{proof}[Proof of \Cref{a1}]
Denote the output of \Cref{a2} by \(\widehat \mu_A\), which is the KPM approximation to the random measure \(\mu_{A,b}\). Then observe that with probability \(1-\delta\) we have that 
\[
    \wass(\mu_A, \widehat\mu_{A}) \leq \wass(\mu_{A}, \mu_{A,b}) + \wass(\mu_{A,b}, \widehat\mu_A) \leq O\left(\dfrac{1}{m}\right)+O\left(\sqrt{\dfrac{\log(m) \log(m/\delta)}{n}}\right).
\]
The first term $\wass(\mu_A, \mu_{A,b})$ is bounded via \Cref{a33}, within which the left-hand side of \Cref{a34} is bounded with high probability using H{\"o}lder's inequality together with \Cref{a35} and \Cref{a31}. The second term $\wass(\mu_{A,b}, \widehat \mu_A)$ is bounded by \Cref{a17}.

Moreover, the measure \(\widehat\mu_A\) may be (optionally) discretized using Gauss-Chebyshev quadrature on \((m+1)^2\) points. That is, let \(x_1, \cdots, x_{m+1}\) and \(w_1, \cdots, w_{m+1}\) be the nodes and weights of the Gauss quadrature associated with the Chebyshev measure. Recall that Gauss quadrature on \(k\) nodes is exact for polynomials of degree \(2k-1\). Then consider the discrete measure 
\[
    \widehat \mu_{A, \textrm{discrete}} \coloneqq \sum_{i=1}^{m+1}\sum_{j=1}^{m+1} w_i w_j p_m(x_i, x_j) \delta_{(x_i, x_j)},
\]
where $p_m$ is as in \Cref{a15}.
The exactness of Gauss quadrature ensures 
\[
    \int_{[-1,1]^2} T_i(x)T_j(y) (\wrt \widehat \mu_A - \wrt \widehat \mu_{A, \textrm{discrete}}) = 0
\]
for all \(i, j \leq m\). When combined with \Cref{a33}, this implies that \(\wass(\widehat \mu_A, \widehat \mu_{A, \textrm{discrete}}) \leq 4C/m\), which merely alters the constant in the bound.
\end{proof}

\subsection{Algorithmic variants}\label{a4} We include here some comments about algorithmic variants that may be useful.

\begin{enumerate}
    \item It is possible to consider an \textit{adjoint-free} version of \Cref{a2} that avoids the use of the adjoint \(A^*\), which is an important feature in some applications, particularly PDEs \cite{b33}. One way to accomplish this is to use $m-1$ matrix-vector products to form the explicit Krylov matrix $\mathcal{K}_m = [b, Ab, \cdots, A^{m-1}b].$
    In analogy to \Cref{a13}, the Gram matrix $\mathcal{K}_m^*\mathcal{K}_m$ contains the ``polyanalytic'' mixed moments corresponding to polynomials in $z$ and $\bar z$. Since there is an invertible linear change of coordinates from \(z=x+iy, \bar{z}=x-iy\) into \(x, y\), one could in principle pre-compute the change of basis coefficients from \(z^k\bar z^l\) into the Chebyshev polynomials \(\wt{T}_i(x) \wt{T}_j(y)\), though this may require high precision. One would likely use the Arnoldi procedure in practice rather than forming the ill-conditioned explicit Krylov matrix $\mathcal{K}_m$.
   
    \item It is possible to consider a low-memory variant of \Cref{a2}. We could compute \(X^*Y\) in \cref{a14} one row at a time, which requires \(O(m^2)\) matvecs but only \(O(n)\) storage. 
    \item Say the spectrum of \(A\) is somewhat oblong, e.g., the range of real parts is decently larger than the range of imaginary parts.
    Then rather than computing \(m\) matvecs in both the \(x\) and \(y\) coordinates, we could decouple the \(x\) and \(y\) directions into \(m_1\) and \(m_2\) matvecs, so that \(X^*Y \in \mathbb{R}^{m_1 \times m_2}\).
    \item If the \(\wt{O}(n^{-1/2})\) error from \Cref{a1} is too high, one could apply \Cref{a2} to the block matrix \(A \otimes I_s \in \mathbb{C}^{ns \times ns}\), where $I_s$ is the $s\times s$ identity matrix and $\otimes$ is the Kronecker product. This would decrease the error due to randomness to \(O((ns)^{-1/2})\). Note that this is equivalent to replacing $\mu_{A,b}$ with a weighted average of $s$ copies of $\mu_{A,b}$ with independent $b$. By picking $s=\lceil m^{2/3}/n^{1/3}\rceil$, this improves the overall error to $\wt O(1/(nm)^{1/3})$ when $m=\Omega(\sqrt n)$.
    \item Because of the decoupling of the $x$- and $y$-coordinate directions, the resulting measure seems to exhibit axis-aligned artifacts. One can remove these artifacts by  ``rotating'' the input, i.e. applying the algorithm to $e^{i\theta }A$ for a few angles $\theta$, rotating back the returned measures, and averaging. That is, use the estimate \(\frac1{\abs\Theta}\sum_{\theta\in\Theta}\widehat\mu_{e^{i\theta}A}\circ(z\mapsto e^{i\theta}z)\) for some set of angles $\Theta$. It may also be possible to work with other polynomial bases, such as disk polynomials \cite{b34}.
\end{enumerate}

\section{Lower bounds for spectral density estimation}\label{a36}

\newcommand{\valg}{v^{\textnormal{alg}}}
\newcommand{\tot}{^{(t)}}

In this section, we prove a lower bound for all matrix-vector query algorithms. That is, we consider any algorithm interacts with the input matrix $A$ only by constructing query vectors $x$ and receiving $Ax$ and $A^*x$. We show that for large matrices, \Cref{a2} makes the asymptotically lowest possible number of queries to achieve its error guarantee stated in \Cref{a1}. In fact, we show the lower bound holds even for algorithms only required to succeed when $A$ is real symmetric.

The main result for this purpose, \Cref{a7}, is a characterization of all adaptive matrix-vector query algorithms which make a small number of queries relative to the matrix size. In particular, closely following ideas of Chewi et al \cite{b26}, we show that any adaptive algorithm which makes at most $m$ matvec queries to large $A$ learns only a small amount of information about $A$ beyond possibly the values of $\tr(A^j)$ for $j=1,\ldots,4m$. We believe this result may be independently useful for producing related lower bounds.

The second step of our lower bound, \Cref{a37}, is to show the values of $\tr(A^j)$ for $j=1,\ldots,4m$ is not enough information about $A$ to reconstruct its spectral density to $o(1/m)$ error. In particular, we produce a family of matrices with far spectral densities, but matching values of $\tr(A^j)$.

In order to argue that matvec algorithms do not learn much information beyond $\tr(A^j)$ for $j=1,\ldots,4m$, we show that the output of any matvec algorithm can be simulated with high probability by an algorithm which takes as input just the values of $\tr(A^j)$ for $j\le 4m$.
To make this formal, we need the notion of \textit{total variation distance,}
\[\tvdist(X,Y)=\inf_{(X,Y)\sim\gamma}\Pr\pare{X\neq Y},\]where the infimum is over all joint distributions $\gamma$ whose marginals are the laws of $X$ and $Y$.
From this formulation of total variation distance, it is clear for every $f$ that\eq{\label{a38}\tvdist(f(X),f(Y))\le\tvdist(X,Y).}
The notation $X\overset d=Y$ means $X$ and $Y$ are equal in distribution, or equivalently $\tvdist(X,Y)=0$.
In this section, $\otimes$ denotes the Kronecker product, $\mathbf1_p\in\R^p$ the column vector of 1s, $I_m$ the $m\times m$ identity, and $e_j$ the $j$th elementary basis column vector of the appropriate dimension.

\begin{proposition}\label{a7}
Let $\valg_0\in\R^n$ be any fixed vector and let \(\valg_j:(\R^n)^j\to\R^n\) be measurable functions. Fix any real symmetric matrix $B\in\R^{p\times p}$ and let $A=U(I_{n/p}\otimes B)U^\top$ be the random matrix where $U$ is a Haar-distributed random orthogonal matrix. If
\[
v_0=\valg_0,\quad
v_j=\valg_j\pare{Av_0,Av_1,\ldots,Av_{j-1}}.\]
then for every \(f_{\textnormal{alg}}:(\R^n)^m\to\mathcal X\) (for any output space $\mathcal X$),
there is a measurable function
\(f_{\textnormal{sim}}:\R^{4m}\to\mathcal X\)
such that
\[
\tvdist\pare{
f_{\textnormal{alg}}\pare{ Av_0,\ldots,Av_{m-1} },
f_{\textnormal{sim}}(\tr(A),\tr(A^2),\cdots,\tr(A^{4m}))}=O\pare{\frac{m^{3/2}p^{1/2}}{n^{1/2}}}.
\]
\end{proposition}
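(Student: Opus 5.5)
Our plan follows Chewi et al.\ \cite{b26}. The strategy is to build a ``simulated'' transcript of the adaptive algorithm whose law is close in total variation to the true transcript $(Av_0,\ldots,Av_{m-1})$, and which depends on $A$ only through the moments $\tr(A^k)$, $k\le 4m$. By \eqref{a38} it is enough to simulate the transcript itself; applying $f_{\textnormal{alg}}$ afterwards cannot increase the distance.

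\textbf{Step 1: reduce to a fixed Krylov basis.} By rotational invariance of $U$, we may assume without loss of generality that $v_0=\valg_0$ is a fixed unit vector, say $e_1$ (scaling is harmless). Adaptivity is handled inductively. Each query $v_j$ is a measurable function of the previous answers, so it lies in the span of $v_0,Av_0,\ldots,Av_{j-1}$ together with fresh directions. By the Haar invariance of $U$ conditioned on the answers so far, the action of $A$ on the orthogonal complement of the explored subspace is a fresh Haar conjugate of the compressed matrix. Carrying out Gram--Schmidt inductively (as in a block Lanczos process), the whole transcript is a measurable function of the Gram/moment matrix
\[
\bigl(w_i^\top A^k w_j\bigr)_{i,j,k},
\]
where the $w_i$ are orthonormal, at most $m$ in number, and the powers satisfy $k\le 2m$ or so. Taking inner products of Krylov vectors of length about $2m$ introduces moments up to order $4m$, which is where the $4m$ comes from.

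\textbf{Step 2: Gaussian approximation of the quadratic forms.} Write $A=U(I_{n/p}\otimes B)U^\top$. For an orthonormal frame $W\in\R^{n\times m}$, the matrix $U^\top W$ is close in total variation to $G/\sqrt n$ with $G$ an i.i.d.\ Gaussian matrix. We would use the standard estimate that the TV distance between $\sqrt n$ times the top $r\times m$ block of a Haar orthogonal matrix and a Gaussian matrix is $O(m\sqrt{r}/\sqrt n)$; the form of the target bound suggests exactly this scale. With $D=I_{n/p}\otimes B$, each entry $W^\top A^kW=\frac1n G^\top D^kG$ is a sum over $n/p$ i.i.d.\ blocks, each of the form $g^\top B^k g$ with $g$ a $p\times m$ block. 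Given the moments $\tr(B^k)=\frac pn\tr(A^k)$, the simulator can sample a proxy: either use the exact expectation $\frac1n\tr(A^k)I_m$ plus an explicitly sampled fluctuation, or sample a fresh Gaussian surrogate with a matching spectrum built from the moments. The key point is that the joint law of $\{G^\top D^k G\}_{k\le 4m}$ depends on $D$ only through its spectrum. Once $G$ is Gaussian and exactly rotation invariant, this law is determined by the eigenvalue multiset of $D$. That multiset is a function of the moments $\tr(A^k)$ for $k\le n$, not only $k\le 4m$, so a further reduction is needed.

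\textbf{Step 3 (main obstacle): only $4m$ moments.} One route is to note that $B$ has $p$ eigenvalues with multiplicity $n/p$, so the spectrum of $D$ has at most $p$ distinct values. If $p\le 4m$, the first $4m$ moments determine it exactly via Newton's identities. If $p>4m$, we would instead argue via a Krylov/Gauss quadrature replacement. The transcript only ever involves $W^\top A^kW$ with $k\le 4m$, so one can substitute for $B$ any $p'\times p'$ matrix $B'$, $p'\le 2m$, whose spectral measure matches the first $4m$ moments of $\mu_B$ (a Gauss quadrature rule). The difficulty is that the Gaussian fluctuations, being sums over $n/p$ versus $n/p'$ blocks, are not identical in law. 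We expect the TV error incurred here to be controlled by a CLT/Pinsker comparison at the same $m^{3/2}p^{1/2}/n^{1/2}$ scale. This is the step I would scrutinize most carefully, trying first to keep $p$ fixed and only match the expectation and covariance of the quadratic forms, which involve moments of order at most $2\cdot 2m=4m$. Combining the TV bounds from Steps 1--3 with the triangle inequality and \eqref{a38} then yields the claim.
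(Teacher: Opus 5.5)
Your proposal has two genuine gaps: Step~2 rests on a false approximation, and Step~3, which carries the actual content of the proposition, is left unproved.

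\textbf{Step~2.} As stated, the approximation fails. For an orthonormal frame $W\in\R^{n\times m}$, the matrix $U^\top W$ has orthonormal columns almost surely, while $G/\sqrt n$ does not, so their total variation distance is $1$. The Haar-block estimate you quote is only meaningful for an $r\times m$ sub-block with $r\ll n$; your own bound gives $O(m)$ at $r=n$. But $W^\top A^kW=(U^\top W)^\top D^k(U^\top W)$ uses all $n$ rows. Moreover, if the $w_i$ come from Gram--Schmidt on the adaptive queries, as your Step~1 suggests, they depend on $A$. Then $W$ is not independent of $U$, and the ``i.i.d.\ blocks'' computation does not apply.

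The paper avoids both problems at the outset. \Cref{a41} pushes all adaptivity into a fixed post-processing map $g$ of the block Krylov data $\set{A^iz_j}_{i+j\le m}$, whose Gaussian starting vectors $z_j$ are independent of $U$. \Cref{a42} then reduces this data, up to a random rotation, to the forms $z_{j'}^\top A^iz_j$ with $i\le 2m$. Since $U^\top Z\overset d=Z$ exactly, nothing is lost in this step.

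\textbf{Step~3.} You leave this step as an expectation. The missing ingredient is a quantitative total-variation comparison between a Wishart matrix and its Gaussian approximation, \Cref{a44}. Diagonalizing $B$ gives $Z^\top A^iZ=\sum_{j=1}^p\lambda_j^i\,\wt Z_j^\top\wt Z_j$, a combination of $p$ independent $m\times m$ Wishart matrices with $t=n/p$ degrees of freedom. Replace each $\wt Z_j^\top\wt Z_j$ by $tI+\sqrt{t/2}\,(X_j+X_j^\top)$. The array $(Z^\top A^iZ)_{i\le 2m}$ then becomes exactly jointly Gaussian. Its law is therefore fixed by its mean, which involves $\tr(A^i)$ for $i\le 2m$, and its covariance, which involves $\tr(A^{i+i'})$ for $i,i'\le 2m$ (\Cref{a45}). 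That covariance is where $4m$ comes from, not the Gram entries themselves.

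This Gaussian surrogate \emph{is} the simulator. $f_{\textnormal{sim}}$ samples it from the traces and then applies $h$, $g$ and $f_{\textnormal{alg}}$. It is therefore a single function of the traces, independent of $B$ and $p$, which is what \Cref{a6} needs when several inputs with $p\ge 4m$ must be indistinguishable.

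Your detours do not supply this:
\begin{itemize}
\item Newton's identities need $p\le 4m$ and knowledge of $p$, which is not the relevant regime.
\item Replacing $B$ by a Gauss-quadrature matrix $B'$ only moves the problem. Comparing the exact laws for $B$ and $B'$ requires the same Gaussian comparison twice. Gauss weights are also generally not realizable as equal multiplicities $n/p'$.
\item A generic CLT controls weak or smooth distances, not total variation, and gives no usable dependence on the $\Theta(m^3)$ entries of the array.
\item Pinsker would need a KL bound between the exact law and the Gaussian, which is again precisely the Wishart-versus-Gaussian estimate of \Cref{a44}.
\end{itemize}
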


\subsection{Proof of \secthm{a6} assuming \secprop{a7}}
In the following lemma we give a family of matrices with matching traces of powers but separated spectral densities.
\begin{lemma}\label{a37}
    Let $B_m\in\R^{m\times m}$ be the diagonal matrix with $(B_m)_{jj}=\cos\pare{\frac{2j-1}{2m}\pi}$.
    Let $k$, $k'$, $m$, $m'$, $n$ be such that $km=k'm' = n$.
    Then $\tr((B_{m}\otimes I_{k})^i)=\tr((B_{m'}\otimes I_{k'})^i)$ for $i\le\min(m,m')$ but if $m\neq m'$ then
    \[\wass(\mu_{B_{m}\otimes I_{k}},\mu_{B_{m'}\otimes I_{k'}})=\Omega\pare{\frac1{\min(m,m')}}.\]
\end{lemma}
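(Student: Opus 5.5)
Both parts reduce to statements about the empirical measures on the Chebyshev nodes. The spectral density of $B_m\otimes I_k$ is exactly $\mu_{B_m}=\frac1m\sum_{j=1}^m\delta\bigl(x-\cos\frac{(2j-1)\pi}{2m}\bigr)$, the uniform measure on the $m$ Chebyshev nodes (the zeros of $T_m$). Likewise, $\frac1n\tr((B_m\otimes I_k)^i)=\frac1m\tr(B_m^i)$. So the trace claim is equivalent to saying that the $m$-point and $m'$-point uniform Chebyshev-node rules integrate $x^i$ identically for $i\le\min(m,m')$.

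\textbf{Matching traces.} I expand $x^i$ in Chebyshev polynomials and show that $\frac1m\sum_{j=1}^m T_\ell(x_j)$ equals $\delta_{\ell 0}$ for every $1\le\ell<2m$; in particular this holds for all $\ell\le m$. With $\theta_j=\frac{(2j-1)\pi}{2m}$ we have $T_\ell(x_j)=\cos(\ell\theta_j)$, and $\sum_{j=1}^m\cos(\ell\theta_j)$ is the real part of a geometric sum. Explicitly,
\[
\sum_{j=1}^m e^{i\ell\theta_j}=e^{i\ell\pi/(2m)}\frac{1-e^{i\ell\pi}}{1-e^{i\ell\pi/m}},
\]
which is valid since $e^{i\ell\pi/m}\ne1$ for $0<\ell<2m$. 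If $\ell$ is even, the numerator vanishes. If $\ell$ is odd, the expression equals $\frac{2}{-2i\sin(\ell\pi/(2m))}$, which is purely imaginary. Either way the real part is $0$.

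This is exactly the statement that Gauss--Chebyshev quadrature with $m$ nodes is exact against the normalized Chebyshev weight $\frac1{\pi\sqrt{1-x^2}}$ up to degree $2m-1$. Hence both empirical measures have the same moments as the arcsine law for every degree $i\le\min(m,m')$ (indeed up to $2\min(m,m')-1$). The traces therefore agree, since both equal $n$ times the arcsine moment.

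\textbf{Separation.} Suppose without loss of generality that $m<m'$. I will lower bound $\wass$ by a single 1-Lipschitz test function $f$, and I expect choosing it so that the constant is uniform in $m'$ to be the main step. The natural choice is the distance to the node set, $f(x)=\dist(x,\{x_1,\dots,x_m\})$, where the $x_j$ are the nodes of $B_m$. Then $\int f\,d\mu_{B_m}=0$, so
\[
\wass\ge\frac1{m'}\sum_{j'=1}^{m'}\dist(y_{j'},\{x_j\}),
\]
where $y_{j'}=\cos\frac{(2j'-1)\pi}{2m'}$ are the nodes of $B_{m'}$. It remains to show this average is $\Omega(1/m)$.

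To do this I work in angle. The bulk nodes $x_j$ with $\theta_j\in[\pi/4,3\pi/4]$ are spaced about $\frac{\pi}{m}\sin\theta\gtrsim\frac1m$ apart. Since $\cos$ is bi-Lipschitz on $[\pi/4,3\pi/4]$, it suffices to show that a constant fraction of the angles $\phi_{j'}=\frac{(2j'-1)\pi}{2m'}$ in that window lie at angular distance $\gtrsim 1/m$ from the grid $\frac{(2j-1)\pi}{2m}$. This splits into two cases.
\begin{itemize}
\item If $m'\ge 2m$, the $\phi_{j'}$ are equispaced with step $\pi/m'\le\pi/(2m)$. Thus each gap of the coarser grid, of width $\pi/m$, contains a constant fraction of fine points lying in its middle half, at distance $\ge\pi/(4m)$ from the grid.
\item If $m<m'<2m$, I compare the two grids directly. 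Writing $\phi_{j'}-\theta_j=\frac{\pi}{2mm'}\bigl((2j'-1)m-(2j-1)m'\bigr)$, the integers $(2j'-1)m\bmod 2m'$ equidistribute over residues in the appropriate class mod $2\gcd(m,m')$. I then need the distance from $0$ to be $\gtrsim m'$ for a constant fraction of $j'$, except in a degenerate coincidence case.
\end{itemize}

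The second case is where I expect the difficulty. When $\gcd(m,m')$ is large, many nodes coincide exactly, so I need to check that the non-coincident fraction is still a constant with gaps of order $1/m$. Simple cases show this can fail: for $m'=3m$ a third of the nodes coincide, yet the bound still holds. I would handle it by bounding the number of coincidences by $\gcd(m,m')\le m'/2$ and noting that the remaining points are at angular distance at least $\frac{\pi}{mm'}\gcd\ge\frac{\pi}{mm'}$. That crude bound gives only $1/(mm')$, so as a fallback I would instead use a smooth test function $f(x)=\frac1{m}\cos(m\arccos x)=\frac{T_m(x)}{m}$ restricted to the bulk. Its Lipschitz constant is $O(1)$ there, it vanishes on the $x_j$, and its average over the $y_{j'}$ is computable by the same geometric-sum identity. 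Any remaining finite set of small cases is absorbed into the constant.
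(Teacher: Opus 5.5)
Your trace argument is correct; it spells out the Gauss--Chebyshev exactness that the paper simply cites. Your case $m'\ge 2m$ is also fine. The gap is the case $m<m'<2m$, which you leave unresolved, and the fallback you propose there cannot work.

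Without a cutoff, your own geometric-sum identity gives $\sum_{j'}T_m(y_{j'})=\sum_{j'}\cos(m\phi_{j'})=0$, because $0<m<2m'$. So $T_m/m$ integrates to $0$ against both measures. More structurally, $\mu_{B_m}$ and $\mu_{B_{m'}}$ share all moments of degree below $2\min(m,m')$, so no polynomial test function of degree $<2m$ can separate them. Restricting to the bulk does not rescue this, because $\cos(m\phi_{j'})$ oscillates in sign. For $m'=m+1$ one has $m\phi_{j'}=\frac{(2j'-1)\pi}{2}-\phi_{j'}$, hence $\cos(m\phi_{j'})=(-1)^{j'+1}\sin\phi_{j'}$. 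The bulk sum is then $O(1)$, and the certificate has size only $O(1/(mm'))$. You need a sign-definite certificate, such as your distance function or a cutoff times $|T_m|/m$. You also need an actual count of bulk nodes that lie far from the coarse grid.

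That count is the missing step. Equidistribution of $(2j'-1)m \bmod 2m'$ holds only over full periods of length $N=m'/\gcd(m,m')$. This period can equal $m'$, which is longer than the bulk window of roughly $m'/2$ indices. One way to close the gap:

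\begin{itemize}
\item Write $g=\gcd(m,m')$. In this case $N\ge 3$, since $m/g<N<2m/g$ and $\gcd(m/g,N)=1$.
\item The residues $r_{j'}=(2j'-1)m-m' \bmod 2m'$ lie on an $N$-point grid of spacing $2g$. In any $L$ consecutive indices, each grid value is hit at most $\lceil L/N\rceil$ times.
\item At most $\eta N+1$ grid points lie within $\eta m'$ of $2m'\mathbb Z$. So at most $(\eta N+1)(L/N+1)\le \eta L+\eta m'+L/3+1$ bulk indices are ``close''.
\item Taking $\eta=1/8$ and $L\approx m'/2$, at least $7m'/48-O(1)$ bulk nodes lie at angular distance $\ge \pi/(16m)$ from the coarse grid. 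This gives the $\Omega(1/m)$ bound.
\end{itemize}

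The paper avoids this issue by a different route. It evaluates $\wass$ exactly via the monotone coupling and writes it as $\int_0^1 w(t)\sin(\pi t)\,dt$. It then uses $\|w\|_\infty\le 1/m+1/m'$ together with $\int_0^1 w=\Theta(1/m+1/m')$, which comes from a residue count over a full period where equidistribution is exact. The pointwise cap on $w$ prevents the mass from concentrating where $\sin(\pi t)$ is small.
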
\begin{proof}
     By Chebyshev quadrature, one has $\frac1m\tr\pare{B_m^i}=\frac1\pi\int_{-1}^1\frac{x^i}{\sqrt{1-x^2}} \wrt x$ for $i\le m$. In particular,
     \[
    \tr\pare{B_{km}^i}=\frac{km}\pi\int_{-1}^1\frac{x^i}{\sqrt{1-x^2}}\wrt x
     =k\tr\pare{B_m^i}=\tr\pare{(B_m\otimes I_k)^i}.\]
     Since $km=k'm'$, we thus have $\tr\pare{(B_m\otimes I_k)^i}=\tr\pare{(B_{m'}\otimes I_{k'})^i}$ for $i\le\min(m,m')$.
     On the other hand, the earth mover's distance between the spectral measures of $B_{m}\otimes I_{k}$ and $B_{m'}\otimes I_{k'}$ is the minimum $\ell_1$ cost of a coupling between the diagonal entries of those two matrices.
     Specifically, denoting $q_r(x)=\frac{\floor{rx}+\frac12}r$ and replacing $x=j/n$, we have
     \spliteq{\label{a39}}{
     W\equiv\wass(\mu_{B_{m}\otimes I_{k}},\mu_{B_{m'}\otimes I_{k'}})
     &=
     \frac1{n}\sum_{j=1}^{n}\abs{(B_m\otimes I_k)_{jj}-(B_{m'}\otimes I_{k'})_{jj}}
     \\&=\frac1{n}\sum_{j=0}^{n-1}\abs{\cos\pare{\frac{2\floor{j/k}+1}{2m}\pi}-\cos\pare{\frac{2\floor{j/k'}+1}{2m'}\pi}}
     \\&=\int_0^1\abs{\cos(\pi q_m(x))-\cos(\pi q_{m'}(x))}\wrt x
     \\&=\int_0^1\abs{\int_{q_{m'}(x)}^{q_m(x)}\sin(\pi t)\wrt t}\wrt x
     }
Let $S$ be the set $$S=\set{(x,t)\in[0,1]^2: q_m(x)\le t\le q_{m'}(x)\text{ or } q_{m'}(x)\le t\le q_{m}(x)}$$
so that $W$ is simply the integral of $\sin(\pi t)$ over $S$. Let
\[w(t)=\max\set{x:(x,t)\in S}-\min\set{x:(x,t)\in S}.\]
so that \cref{a39} reduces to
\eq{\label{a40}W=\int_0^1w(t)\sin(\pi t)\wrt t.}
Since $\abs{q_r(x)-x}\le\frac1{2r}$, we have $w(t)\le\frac1m+\frac1{m'}$.
Suppose $\int_0^1w(t)\ge c\pare{\frac1m+\frac1{m'}}$ for some positive constant $c$. Then \cref{a40} would be minimized for $w(t)=\frac1m+\frac1{m'}$ for $t\le c$ and $w(t)=0$ for $t>c$. In particular,
\[
W
\ge\pare{\frac1m+\frac1{m'}}\int_0^c\sin(\pi t)\wrt t
\ge\frac1\pi\pare{\frac1m+\frac1{m'}}\pare{1-\cos(\pi c)}.\]
It therefore suffices to prove this supposition. To this end, note
\spliteq{}{
\int_0^1w(t)\wrt t
  &=\int_0^1\abs{q_m(x)-q_{m'}(x)}\wrt x
\\&=\frac1n\int_0^1\abs{k\floor{mx}-k'\floor{m'x}+\frac{k-k'}2}\wrt x
\\&=\frac1{n^2}\sum_{j=0}^{n-1}\abs{k\floor{j/k}-k'\floor{j/k'}+\frac{k-k'}2}\wrt x
\\&=\frac1{n^2}\sum_{j=0}^{n-1}\abs{(j\mod k)-(j\mod k')+\frac{k-k'}2}\wrt x
}
Let $d=\gcd(k,k')$. Then we group the terms in the sum based on the residue of $j=id+r$ modulo $d$.
\spliteq{}{
\int_0^1w(t)\wrt t
  &=\frac1{n^2}\sum_{r=0}^{d-1}\sum_{i=0}^{n/d-1}
\abs{(j\mod k)-(j\mod k')+\frac{k-k'}2}
\\&=\frac{d^2}{n^2}\sum_{i=0}^{n/d-1}
\abs{(i\mod k/d)-(i\mod k'/d)+\frac{k-k'}{2d}}.}
Notice that the map $i\mapsto(i\mod k/d,i\mod k'/d)$ is a $n/\lcm(k,k')$-to-1 map from $\set{0,\ldots,n/d-1}$ to $\set{0,\ldots,k/d-1}\times\set{0,\ldots,k'/d-1}$. That is,
\[\int_0^1w(t)\wrt t
=\frac{d^2}{n^2}\frac{n}{\lcm(k,k')}\sum_{x=0}^{k/d-1}\sum_{y=0}^{k'/d-1}\abs{x-y+\frac{k-k'}{2d}}
=\Theta\pare{\frac{d^2}{n^2}\cdot\frac{n}{\lcm(k,k')}\cdot\frac{kk'}{d^2}\frac{k+k'}{d}}.\]
Which, since $kk'/d=\lcm(k,k')$, simplifies to $\Theta\pare{ \frac1{m}+\frac1{m'} }$ as desired.

\end{proof}

\begin{proof}[Proof of \Cref{a6}]
Let $B_{(\cdot)}$ be as in \Cref{a37}.
Let $m_1<\cdots<m_s$ be some divisors of $n$. Set
\[A_j = B_{m_j}\otimes I_{n/m_j}\]
so that $A_j$ is an $n\times n$ matrix with a spectral measure matching that of $B_{m_j}$. By \Cref{a37},
\[
\tr(A_j^i)=
\tr(A_{j'}^i)\]
for all $i\le m_1$ and
\[
\wass(\mu_{A_j},\mu_{A_{j'}})>\frac1{Cm_{s-1}}.\]
Let $X_j$ be the result of some algorithm making at most $m_1/4$ matrix vector queries on input $UA_jU^\top$. By \Cref{a7}, each $X_j$ are all within $O\pare{\frac{m_1^{3/2}m_s^{1/2}}{n^{1/2}}}$ in total variation distance to the same random variable (produced by $f_{\textnormal{sim}}$).
Thus if the target accuracy $\eps$ is less than $\frac1{2Cm_{s-1}}$, then with probability $1-O\pare{\frac{m_1^{3/2}m_s^{1/2}}{n^{1/2}}}$ over the randomness of $U$, the algorithm must fail for all but one of the $s$ inputs.
The algorithm therefore cannot succeed with overall probability more than
\[\frac1s+O\pare{\frac{m_1^{3/2}m_s^{1/2}}{n^{1/2}}}.\]
Now, simply set $m_j=\frac1{\eps}+j-1$, $s=1/\delta$, and $n = \prod_{j=1}^s m_j$.
\end{proof}

\subsection{Proof of \secprop{a7}}
We now show how to simulate any matvec algorithm with an algorithm taking as input the traces of powers of $A$.
\begin{lemma}[{\cite[Lemma 6.3]{b35}}]\label{a41}
Let $\valg_j$, $v_j$, and $A$ be as in \Cref{a7}.
Let $z_j\sim\normal(0,I)$. Then there is a function $g$ depending only on $\valg$ such that
\[g(\set{A^iz_j}_{i+j\le m})\overset d=\set{Av_j}_{j<m}.\]
\end{lemma}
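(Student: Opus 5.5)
We prove the lemma by an inductive coupling. At each step we replace the part of the new query that is ``unseen'' by the algorithm with a fresh Gaussian direction. This is justified by the orthogonal invariance of the law of $A=U(I_{n/p}\otimes B)U^\top$: for every fixed orthogonal $Q$ we have $QAQ^\top\overset d=A$. Throughout, we write $V_j=\mathrm{span}\set{v_0,\ldots,v_{j-1},Av_0,\ldots,Av_{j-1}}$ for the subspace already ``seen'' by the algorithm before query $j$, and $P_j$ for the orthogonal projector onto $V_j$.

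\textbf{Step 1 (conditional invariance).} Let $\mathcal H_j$ denote the history $(v_0,Av_0,\ldots,v_{j-1},Av_{j-1})$. We claim that, conditional on $\mathcal H_j$, the law of $A$ is invariant under $A\mapsto QAQ^\top$ for every orthogonal $Q$ with $Qx=x$ for all $x\in V_j$.

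To see this, note that such a $Q$ (chosen measurably as a function of the history) maps the history produced on input $A$ to the identical history produced on input $QAQ^\top$. Indeed, $v_0$ is fixed, and inductively $QAQ^\top v_i=QAv_i=Av_i$, because $v_i$ and $Av_i$ lie in $V_j$. Combined with $QAQ^\top\overset d=A$, this gives the claim. Making this rigorous is the main obstacle: it needs a disintegration argument. I would handle it by letting $Q$ be a Haar element of the stabilizer of $V_j$, drawn independently given $\mathcal H_j$, and checking equality of the joint laws of $(\mathcal H_j, A)$ and $(\mathcal H_j, QAQ^\top)$.

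\textbf{Step 2 (simulated process).} Take independent $z_0,z_1,\ldots\sim\normal(0,I)$, also independent of $U$. Define simulated queries $w_j$ recursively. First compute $u_j=\valg_j(Aw_0,\ldots,Aw_{j-1})$. Then, with $V_j,P_j$ now formed from the $w$'s, set
\[
w_j = P_j u_j + \norm{(I-P_j)u_j}\,\frac{(I-P_j)z_j}{\norm{(I-P_j)z_j}}.
\]
Because $z_j$ is independent of $(A,z_0,\ldots,z_{j-1})$, the vector $(I-P_j)z_j/\norm{(I-P_j)z_j}$ is uniform on the unit sphere of $V_j^\perp$ given the past. By Step 1, replacing the true orthogonal component $(I-P_j)u_j$ by this independent uniform direction of the same norm does not change the joint law of $(\mathcal H_j, A)$. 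This is because a Haar element of the stabilizer of $V_j$ sends $(I-P_j)u_j$ to a uniform vector of that norm in $V_j^\perp$. Induction on $j$ then gives
\[
(Aw_0,\ldots,Aw_{m-1})\overset d=(Av_0,\ldots,Av_{m-1}).
\]

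\textbf{Step 3 (expressing everything through Krylov data).} We show by induction that $w_j$ is a linear combination of $\set{A^iz_{j'}}_{i+j'\le j}$ with measurable coefficients, hence $Aw_j$ is a combination of $\set{A^{i}z_{j'}}_{i+j'\le j+1}$ with the same coefficients.

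For the base case, $w_0=\norm{\valg_0}z_0/\norm{z_0}$; here $V_0=\set0$, and this is the case where the fixed vector $v_0$ gets randomized. For the inductive step, $V_j$ is spanned by vectors $w_i, Aw_i$ with $i<j$. By the induction hypothesis these lie in $\mathrm{span}\set{A^iz_{j'}}_{i+j'\le j}$ with known coefficients. Hence $P_j$, $u_j$, $(I-P_j)z_j$ and all the norms are measurable functions of the Gram matrix and coordinates of the vectors $\set{A^iz_{j'}}_{i+j'\le j}$, which also include $z_j$ itself. Consequently $(Aw_j)_{j<m}$ is a measurable function $g$, depending only on $\valg$, of $\set{A^iz_j}_{i+j\le m}$. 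Combined with Step 2, this proves the lemma.
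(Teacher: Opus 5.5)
There is a genuine gap in Step~2, and it is visible already at $j=0$.

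\textbf{Where Step~2 breaks.} What your Step~1 actually gives is conjugation invariance. Conditional on $\mathcal H_j$, $Av_j\overset d=QAQ^\top v_j=Q\,A\,(Q^\top v_j)$ for $Q$ Haar on the stabilizer of $V_j$. The vector $Q^\top v_j$ is indeed distributed like your $w_j$, but the response carries an extra factor $Q$ on the left. That factor is a random rotation of $V_j^\perp$, which is exactly where the new part of the response lives, and it is coupled to $Q^\top v_j$. You drop it, and the resulting claim $(Aw_0,\ldots,Aw_{m-1})\overset d=(Av_0,\ldots,Av_{m-1})$ is false:
\begin{itemize}
\item For $B=I_p$ (so $A=I_n$), $Av_0=v_0$ is deterministic, while $Aw_0=w_0=\norm{v_0}z_0/\norm{z_0}$ is uniform on a sphere. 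At later steps $Aw_j$ has a uniformly random component in $V_j^\perp$, whereas the true transcript is deterministic.
\item This is not a degeneracy. For $B=\mathrm{diag}(1,-1)$ and $v_0=e_1$, $Aw_0$ is uniform on the unit sphere, so $e_1^\top Aw_0$ has variance $1/n$, while $e_1^\top Av_0$ has variance $2/(n+2)$.
\end{itemize}
Moreover, you feed $\valg_j$ these wrong-frame responses, so even the $u_j$ have the wrong law. A simulation run in a random frame can at best reproduce rotation-invariant data. The target law is not rotation invariant, since $v_0$ and the maps $\valg_j$ are fixed. So the output must be mapped back by a history-dependent orthogonal map; a single independent Haar rotation, as in \Cref{a42}, will not do.

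\textbf{The missing frame map.} Set $E_{-1}=I$ and $v_0=\valg_0$. At step $j$:
\begin{itemize}
\item let $v_j=\valg_j(E_0Aw_0,\ldots,E_{j-1}Aw_{j-1})$ and $u_j=E_{j-1}^\top v_j$;
\item form $w_j$ from $u_j$ exactly as you do;
\item put $E_j=E_{j-1}S_j$, where $S_j$ is the reflection in $V_j^\perp$ exchanging $(I-P_j)z_j/\norm{(I-P_j)z_j}$ and $(I-P_j)u_j/\norm{(I-P_j)u_j}$ (take $S_j=I$ if $(I-P_j)u_j=0$).
\end{itemize}
Then $S_j$ fixes $V_j$ pointwise and $S_jw_j=u_j$, so $E_jw_j=v_j$. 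Since $w_i,Aw_i\in V_k$ for $i<k$, the output $(E_jAw_j)_{j<m}$ is exactly the true transcript on the input $A^\star=E_{m-1}AE_{m-1}^\top$.

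Your Step~1 invariance, applied to the simulated history, gives $E_jAE_j^\top\overset d=E_{j-1}AE_{j-1}^\top$; this uses that $E_{j-1}$ and $S_j$ are measurable with respect to that history. Hence $A^\star\overset d=A$. Note that uniformity of the new direction of $w_j$ is never needed: $z_j$ only supplies a direction in $V_j^\perp$ on which the required powers of $A$ are available. Your Step~3 bookkeeping then goes through unchanged, because the $E_j$ are computable from the same Krylov data $\set{A^iz_j}_{i+j\le m}$.

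The paper itself gives no argument for this lemma; it imports the statement from \cite{b35}.
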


\begin{lemma}\label{a42}
Let $A$ be as in \Cref{a7} and $z_j\sim\normal(0,I)$. Then there is a function $h$ such that
\[h(\set{z_{j'}^TA^{i}z_j}_{(i,j,j')\in H})\overset d=\set{A^iz_j}_{i+j\le m}\]
where
\[H=\set{(i,j,j'):i+j+j'\le 2m,j,j'\le m}.\]
\end{lemma}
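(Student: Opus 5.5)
The plan is to exploit the orthogonal invariance of the joint law of $A$ and the Gaussian vectors. The key point is that a finite family of random vectors with an orthogonally invariant joint law is determined in law by its Gram matrix. The Gram matrix of $\set{A^iz_j}_{i+j\le m}$ is built exactly from the quadratic forms indexed by $H$.

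\emph{Step 1 (invariance).} For any fixed orthogonal $Q\in\R^{n\times n}$, the pair $(QAQ^\top,\{Qz_j\})$ has the same law as $(A,\{z_j\})$. Indeed, $QU$ is again Haar and independent of the $z_j$, and $Qz_j\sim\normal(0,I)$ independently. Since $(QAQ^\top)^iQz_j=QA^iz_j$, the $n\times N$ matrix $V$ whose columns are $A^iz_j$ for $i+j\le m$ (here $N=\abs{\set{(i,j):i+j\le m}}$) satisfies $QV\overset d=V$ for every fixed $Q$. Hence $QV\overset d=V$ also for $Q$ Haar and independent of $(A,\{z_j\})$.

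\emph{Step 2 (Gram matrix suffices).} Let $G=V^\top V$. Its $((i,j),(i',j'))$ entry is
\[(A^iz_j)^\top A^{i'}z_{j'}=z_j^\top A^{i+i'}z_{j'},\]
using $A=A^\top$. Since $i+j\le m$ and $i'+j'\le m$, the triple $(i+i',j',j)$ lies in $H$: we have $j,j'\le m$ and $(i+i')+j+j'\le 2m$. So $G$ is a deterministic function of $\set{z_{j'}^\top A^iz_j}_{(i,j,j')\in H}$.

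Assume $n\ge N$; otherwise $n=O(m^2)$ and the bound in \Cref{a7} is vacuous. Set $R(G)=\bmat{G^{1/2}\\0}\in\R^{n\times N}$. Then $R(G)^\top R(G)=G=V^\top V$, so there is an orthogonal $O=O(V)$ with $V=O\,R(G)$. One gets $O$ by extending the partial isometry between the column spaces, chosen measurably, e.g. via a Gram--Schmidt/QR rule applied to $V$ and $R(G)$ with a fixed completion on degenerate sets.

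Now take $Q$ Haar and independent of $V$. Then $V\overset d=QV=(QO)R(G)$. Conditionally on $V$, the matrix $QO$ is Haar, so $QO$ is Haar and independent of $G$. Therefore $V\overset d=\wt Q\,R(G)$ with $\wt Q$ Haar and independent of $G$. Define $h(\cdot)=\wt Q\,R(G(\cdot))$, where $\wt Q$ is independent auxiliary randomness. If a deterministic function is required, realize $\wt Q$ as a measurable function of an independent uniform seed and absorb the seed into $h$; this is the same convention under which \Cref{a41} and the $f_{\textnormal{sim}}$ of \Cref{a7} are randomized maps.

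\emph{Expected obstacle.} The only delicate point is the measurability of the orthogonal $O(V)$ when $V$ is rank-deficient, which happens for instance if $B$ has few distinct eigenvalues, so that the $A^iz_j$ are linearly dependent. I would handle this by an explicit measurable construction: run Gram--Schmidt on the columns of $V$ in a fixed order, skipping dependent columns, and complete to an orthonormal basis by a fixed measurable rule. Do the same for $R(G)$, and let $O$ map one frame to the other. The independence claim for $QO$ then follows from left-invariance of Haar measure conditional on $V$.
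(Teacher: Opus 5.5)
Your proposal is correct and follows the same route as the paper. The quadratic forms indexed by $H$ give the Gram matrix of $\{A^iz_j\}_{i+j\le m}$, a factorization of that Gram matrix recovers the vectors up to an orthogonal transformation, and the orthogonal invariance of the joint law of $(A,\{z_j\})$ lets you finish by applying an independent Haar rotation. You also supply the invariance, measurability and independence details the paper leaves implicit. One minor point: the restriction $n\ge N$ is unnecessary and makes you prove slightly less than the stated lemma. Since $\operatorname{rank} G\le n$, $G$ always admits a factorization $R^\top R$ with $R\in\R^{n\times N}$ (for example from its eigendecomposition), so the argument works for every $n$ without appealing to vacuity of the bound in \Cref{a7}.
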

\begin{proof}
Since $A$ is real symmetric, $\set{z_j^TA^iz_{j'}}_{(i,j,j')\in H}$ is equivalent (up to some duplicated entries) to the set of inner products
$\label{a43}\set{(A^{i'}z_{j'})^\top(A^{i}z_j)}_{i+j\le m,i'+j'\le m}$
which $h$ may compute a factorization of.
For two collections of vectors arranged into the columns of matrices $X$ and $Y$, if all the inner products agree, i.e. $X^*X=Y^*Y$, then there exists a orthogonal $U$ such that $X=UY$. Finally note by construction of $A$ that\(\set{UA^iz_j}_{i+j\le m}\overset{d}{=}\set{A^iz_j}_{i+j\le m}\), so applying a random orthogonal matrix to the factorization found by $h$ achieves the desired distribution.
\end{proof}

The following \Cref{a44}
is a non-asymptotic version of a lemma appearing several places, such as \cite[Theorem 1.2]{b36}, \cite[Theorem 7]{b37}, and \cite[Theorem 1]{b38}. This non-asymptotic version was first stated without a proof in \cite[Lemma 5.3]{b26}, and then again in a concurrent preprint \cite{b39}. A non-trivial refinement of the existing asymptotic proofs verifies the non-asymptotic version.\footnote{A full proof was provided in private correspondence with the authors of \cite{b39}.}

\Cref{a44} will be used to argue that the bilinear forms $z_{j'}^\top A^iz_j$ appearing in \Cref{a42} look like Gaussian fluctuations around their means.

\begin{lemma}\label{a44}
Let $Z\in\R^{t\times m}$ and $X\in\R^{m\times m}$ have i.i.d. $\normal(0,1)$ entries. Then
\[
\tvdist\pare{Z^\top Z, tI+\sqrt{\frac t2}\pare{X+X^\top}}\le O\pare{\frac{m^{3/2}}{t^{1/2}}}.\]
The matrix $Z^\top Z$ is called a Wishart matrix.
\end{lemma}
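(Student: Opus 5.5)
The plan is to compare the two densities directly on the space of symmetric $m\times m$ matrices and to bound the total variation distance by a first-order expansion of the log-likelihood ratio. First, two reductions. We may assume $m^3\le c\,t$ for a small constant $c$, since otherwise the claimed bound is trivial (total variation is at most $1$). In particular $t\ge m$, so the Wishart matrix has a density on positive definite matrices,
\[
f_W(W)\propto \det(W)^{(t-m-1)/2}e^{-\tr(W)/2}.
\]
Next, check the covariances of the comparison law $G=tI+\sqrt{t/2}(X+X^\top)$. Its off-diagonal entries have variance $t$ and its diagonal entries have variance $2t$, matching those of $Z^\top Z$. Hence $G$ has density $f_G(W)\propto \exp\pare{-\tr((W-tI)^2)/(4t)}$.

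Change variables to $W=tI+\sqrt t\,H$. Then the log-ratio is
\[
\log\frac{f_W}{f_G}=\text{const}+\frac{t-m-1}{2}\log\det\pare{I+\tfrac{H}{\sqrt t}}-\frac{\sqrt t}{2}\tr H+\frac14\tr H^2 .
\]
Expand $\log\det(I+H/\sqrt t)=\sum_{k\ge1}\frac{(-1)^{k+1}}{k}t^{-k/2}\tr H^k$. This series is valid on the event $\mathcal E=\{\magn H\le C\sqrt m\}$, which has probability $1-e^{-\Omega(m)}$ under both laws by standard norm bounds for GOE and Wishart matrices, and on which $\magn{H}/\sqrt t\le C\sqrt{c}/m<1$. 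The terms of order $\sqrt t\,\tr H$ and $\tr H^2$ cancel exactly against the Gaussian exponent. What remains is
\[
L(H)=-\frac{m+1}{2\sqrt t}\tr H+\frac{1}{6\sqrt t}\tr H^3+R(H),
\]
where $R$ collects $O\pare{(m/t)\tr H^2}$ and $O\pare{t^{-1}\tr H^4}$ plus higher-order terms. On $\mathcal E$ one has $\abs{R}\lesssim m^3/t$.

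Under the Gaussian law, $\tr H\sim\normal(0,2m)$ and $\tr H^3$ has mean $0$ and variance $O(m^3)$ (a standard moment computation for GOE). So the two leading terms of $L$ are mean-zero with standard deviation $O(m^{3/2}/\sqrt t)$.

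To avoid computing the normalizing constants, which would require a multivariate Stirling estimate for $\Gamma_m(t/2)$, I would use the following device. Write $f_W/f_G=c\,e^{L}$ on $\mathcal E$. Since both densities integrate to $1$ up to $e^{-\Omega(m)}$ errors, $c$ is forced to equal $1/\E_G[e^{L}\mathbf 1_{\mathcal E}]$ up to that error. Therefore
\[
\tvdist\le \tfrac12\E_G\abs{\frac{e^{L}}{\E_G e^{L}}-1}+e^{-\Omega(m)}\lesssim \E_G\abs{e^{L}-1}+e^{-\Omega(m)} .
\]
Then bound $\E_G\abs{e^L-1}\le \E_G\abs{L}e^{\abs L}$ by Cauchy–Schwarz. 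This gives $O\pare{m^{3/2}/\sqrt t+m^3/t}=O\pare{m^{3/2}/\sqrt t}$ in the regime $m^3\le ct$.

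The main obstacle I expect is controlling $e^{\abs L}$ in the Cauchy–Schwarz step. The cubic term $\tr H^3/\sqrt t$ is not uniformly bounded, so one needs an exponential moment bound $\E_G e^{2\abs L}=O(1)$ on $\mathcal E$. I would try to get this from the operator-norm truncation together with concentration of $\tr H^3$ for the GOE: $\tr H^3$ is a Lipschitz polynomial on the truncated set, so Gaussian concentration applies. A second, related difficulty is handling the tails outside $\mathcal E$ with constants good enough that the truncation costs only $e^{-\Omega(m)}$. This requires care when $m$ is small, and there I would fall back on the cruder threshold $\magn H\le C\sqrt{m}+\sqrt{\log t}$.
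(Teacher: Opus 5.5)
The paper does not prove this lemma. It cites \cite[Lemma 5.3]{b26} and a private communication, and remarks only that a refinement of the asymptotic arguments of \cite{b36,b37,b38} gives the non-asymptotic rate. Your proposal is such a refinement of the likelihood-ratio argument of Bubeck--Ding--Eldan--R\'acz, and it is sound.

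That argument evaluates the normalizing constants with Stirling's formula for $\Gamma_m(t/2)$, and it only shows that the log-likelihood ratio tends to $0$ in probability. You differ in two ways:
\begin{enumerate}
\item You fix the constant exactly by self-normalization, $c=P_W(\mathcal E)/\E_G[e^{L}\mathbf 1_{\mathcal E}]$, so all the Stirling bookkeeping disappears.
\item You extract an explicit rate from $|e^L-1|\le|L|e^{|L|}$ together with Cauchy--Schwarz.
\end{enumerate}
The rest checks out: the expansion, the exact cancellation of the $\sqrt t\,\tr H$ and $\tr H^2$ terms, the covariance match, and the sizes $\mathrm{Var}(\tr H)=2m$ and $\mathrm{Var}(\tr H^3)=O(m^3)$ under the Gaussian law.

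Both points you flagged need to be carried out, and both go through.

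\textbf{Truncation cost for small $m$.} Your displayed chain silently drops the $e^{-\Omega(m)}$ truncation cost. That term is not $O(m^{3/2}t^{-1/2})$ when $m\lesssim\log(t/m^3)$, so the enlarged threshold is necessary, not a fallback. Take $r=C(\sqrt m+\sqrt{\log t})$; the constant in front of $\sqrt{\log t}$ matters. The GOE operator norm concentrates with tails $e^{-u^2/4}$, so a bare $\sqrt{\log t}$ margin only buys $t^{-1/4}$. With this $r$, summing the series gives
\[
\sup_{\mathcal E}|R|\lesssim \frac{m r^4}{t}+\frac{m^2r^2}{t}.
\]
This is still $O(m^{3/2}t^{-1/2})$ under $m^3\le ct$ once $t$ exceeds an absolute constant. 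The extra terms $m\log^2 t/t$ and $m^2\log t/t$ are both dominated.

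\textbf{Exponential moment.} Replace $x^3$ by $\psi(x)=\mathrm{clip}(x,-r,r)^3$.
\begin{itemize}
\item The map $H\mapsto\tr\psi(H)$ equals $\tr H^3$ on $\mathcal E$.
\item By Hoffman--Wielandt, $|\tr\psi(H)-\tr\psi(H')|\le 3r^2\sqrt m\,\|H-H'\|_F$.
\item Hence $t^{-1/2}\tr\psi(H)$ is sub-Gaussian with parameter $O(\sqrt m\, r^2/\sqrt t)=O(1)$.
\item Its mean is negligible, because it differs from $\E_G\tr H^3=0$ only on $\mathcal E^c$.
\end{itemize}
The term $\tr H$ is exactly Gaussian with variance $2m$, and $R$ is bounded on $\mathcal E$. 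Together these give $\E_G[e^{2|L|}\mathbf 1_{\mathcal E}]=O(1)$, so your Cauchy--Schwarz step yields the claimed $O(m^{3/2}t^{-1/2})$.
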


\begin{lemma}\label{a45}
Let $Z\in\R^{tp\times m}$ have i.i.d. $\normal(0,1)$ entries. Fix any symmetric $B\in\R^{p\times p}$ and let $A=B\otimes I_t$. Then there is a Gaussian matrix $G$ with correlated entries entirely characterized by $\set{\tr(A^i)}_{i\le\ell}$ and $\set{\tr(A^{2i})}_{i\le\ell}$ such that
\[\tvdist\pare{\pmat{Z^\top A Z&\cdots&Z^\top A^\ell Z},G}\le O\pare{\frac{m^{3/2}}{t^{1/2}}}.\]
\end{lemma}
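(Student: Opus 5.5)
We first reduce the statistic to a sum of $t$ i.i.d.\ random matrices, then run a normal-approximation argument whose error is governed by $t$ (the number of summands) and $m$ (the matrix size), so that $p$ never enters. The point of this indexing is that the earlier route, which applies \Cref{a44} separately to each of the $p$ eigenspaces, cannot give the stated bound. Any tensorization in total variation, KL or Hellinger then costs a factor of $p$ or $p^{1/2}$.

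\textbf{Step 1: reindex by the $t$ copies.} Write $B=VDV^\top$ with $D=\mathrm{diag}(d_1,\dots,d_p)$, and index the rows of $Z$ by pairs $(b,s)\in[p]\times[t]$. Because $A=B\otimes I_t$ acts only on the first index, we have
\[
Z^\top A^iZ=\sum_{s=1}^t Y_s^\top B^iY_s ,
\]
where $Y_s\in\R^{p\times m}$ are i.i.d.\ standard Gaussian. By rotation invariance we may replace $Y_s$ by $V^\top Y_s$. Hence the whole tuple $\pmat{Z^\top AZ&\cdots&Z^\top A^\ell Z}$ equals $\sum_{s=1}^t \xi_s$, where
\[
\xi_s=\pare{\textstyle\sum_b d_b^{\,i}\, y_{s,b}y_{s,b}^\top}_{i\le\ell}
\]
are i.i.d.\ and $y_{s,b}$ denotes the $b$-th row of $V^\top Y_s$.

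\textbf{Step 2: identify the Gaussian target.} The natural candidate is $G=t\,\E\xi_1+\sqrt t\,\mathcal N(0,\Cov\xi_1)$. By Wick's formula, $\E\xi_1$ has $i$-th block $\tr(B^i)I_m=\tfrac1t\tr(A^i)I_m$. Likewise $\Cov\xi_1$ has entries built from $\sum_b d_b^{\,i+j}=\tfrac1t\tr(A^{i+j})$, acting on the symmetric-matrix coordinates by $X\mapsto X+X^\top$ as in \Cref{a44}. This shows that $G$ is determined by the traces listed in the statement, modulo bookkeeping of which powers $i+j$ are required. In particular, when $\ell=1$ this $G$ is exactly the GOE-type matrix of \Cref{a44}, which is a useful consistency check.

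\textbf{Step 3: normal approximation in $t$.} The content of \Cref{a44} is a non-asymptotic normal approximation, in total variation, for a sum of $t$ i.i.d.\ rank-one terms $yy^\top$ in $m\times m$ symmetric-matrix space. Its rate $m^{3/2}t^{-1/2}$ is $(\text{dimension})^{3/4}t^{-1/2}$ with dimension $\asymp m^2$. I would redo that argument for the summands $\xi_s$, in either of two forms. One option is a Lindeberg-type swap in which the $\xi_s$ are replaced by matching Gaussians one at a time; the other is an entropic/local CLT. In both, the size of the error is controlled by normalized third moments of $\xi_s$ in its covariance metric. The key estimate to establish is that this normalized third moment is bounded by a function of $m$ alone, uniformly in $p$ and in the $d_b\in[-1,1]$. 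Heuristically this holds because each block $\sum_b d_b^{\,i}y_by_b^\top$ is itself a weighted Wishart-like quadratic form with effective dimension $m^2$, whereas $p$ only enters through the weights.

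\textbf{Main obstacle.} The hard step is the uniformity just described, and in particular the dimension of $\xi_s$. Naively $\xi_s$ lives in dimension $\ell m^2$, and the $\ell$ blocks are linearly dependent through a Vandermonde structure in the $d_b$. I would first handle degeneracy by passing to the support of $\Cov\xi_1$: there are at most $\min(p,\ell)$ independent directions per matrix entry, and the same linear map applied to both sides does not increase total variation by \cref{a38}. I would then check whether the normalized third moment grows with $\min(p,\ell)$. If it does, my fallback is to group the eigenvalues $d_b$ into $O(1)$ classes, or to take $\ell\le m$ as in the application. Either way the loss becomes a constant and the stated $O(m^{3/2}/t^{1/2})$ is recovered.
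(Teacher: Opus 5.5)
Your objection to the per-eigenspace route is correct, and it is the route the paper takes. The paper writes the tuple as $(\wt Z_1^\top\wt Z_1,\dots,\wt Z_p^\top\wt Z_p)$ times the Vandermonde map $V\otimes I_m$, with independent Wishart blocks. It replaces each block by $tI+\sqrt{t/2}(X_j+X_j^\top)$, then applies \cref{a38} and \Cref{a44} to the whole $p$-tuple as if it were one Wishart matrix. Subadditivity over the $p$ independent blocks honestly gives only $p\cdot O(m^{3/2}t^{-1/2})$. Your Step 2 bookkeeping is also right: the covariance between blocks $i$ and $i'$involves $\tr(A^{i+i'})$, not only $\tr(A^{2i})$. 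This is why \Cref{a7} feeds in all traces up to $4m=2\ell$.

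\textbf{The gap.} Your replacement does not repair this, because the key estimate of Step 3 is false. Take $m=1$, $B=\mathrm{diag}(d_1,\dots,d_p)$ with distinct nonzero $d_b$, and $\ell\ge p$. Then $\xi_s=M^\top u_s$, where $M=(d_b^{\,i})\in\R^{p\times\ell}$ has rank $p$ and $u_s=(y_{s,b}^2)_{b\le p}$ has i.i.d.\ $\chi^2_1$ entries. After whitening on the support of $\Cov\xi_1$, $\xi_s$ is an isometric image of $p$ i.i.d.\ standardized $\chi^2_1$ variables. Its normalized third cumulant therefore has squared Frobenius norm exactly $8p$. This grows with $\min(p,\ell)$ already at $m=1$, so no function of $m$ alone bounds it.

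\textbf{The statement itself fails.} This is not an artifact of Lindeberg or entropic methods: read uniformly in $p$ and $\ell$, as stated, the lemma is false. In the same example the tuple is an injective linear image of $w=(\|\wt Z_b\|^2)_{b\le p}$, i.e.\ of $p$ i.i.d.\ $\chi^2_t$ variables. By \cref{a38} applied to a left inverse, it suffices to lower-bound $\tvdist(w,\gamma)$ over Gaussians $\gamma$ on $\R^p$.
\begin{itemize}
\item Every Gaussian is invariant under the reflection $R(x)=2\nu-x$ through its mean, so $\tvdist(w,R(w))\le 2\,\tvdist(w,\gamma)$.
\item $R(w)$ is again a product, of reflected, oppositely skewed $\chi^2_t$ laws. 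Each has Bhattacharyya coefficient at most $1-c/t$ with $\chi^2_t$, uniformly in the center, since the skewness $\sqrt{8/t}$ flips sign.
\item The Bhattacharyya coefficient is multiplicative over products and bounds $1-\tvdist$ from above, so $\tvdist(w,R(w))\ge 1-(1-c/t)^p$.
\end{itemize}
With $p=\ell=t\to\infty$, the distance to every Gaussian stays bounded below, while the claimed bound is $O(t^{-1/2})$. Any correct version must carry a factor growing with $\min(p,\ell)$, heuristically $\sqrt{\min(p,\ell)}$.

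\textbf{The fallbacks.} Neither removes this factor. The $d_b$ are fixed by $B$, so regrouping them changes the law being approximated. Restricting to $\ell\le m$ still allows $p=\ell=m$, where the same mechanism costs a factor growing with $m$ (heuristically $\sqrt m$), not a constant. The application uses $\ell=2m$ anyway. So neither your argument nor the paper's establishes the lemma as stated. What the paper's decomposition does prove, with the factor $p$ restored, must then be propagated into \Cref{a7} and into the size condition on $n$ in \Cref{a6}.
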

\begin{proof}
Let $\lambda_1,\ldots,\lambda_p$ be the eigenvalues of $B$ and denote the Vandermonde matrix
\[V=\bmat{
\lambda_1&\cdots&\lambda_1^\ell\\\vdots&&\vdots\\
\lambda_p&\cdots&\lambda_p^\ell
}.\]
Let $B=UDU^\top$ be the diagonalization of $B$. Set $\wt Z=(U^\top\otimes I_t)Z$. 
\[
Z^\top A^iZ
=Z^\top(B^i\otimes I_t)Z
=\wt Z^\top(U^\top\otimes I_t)(B^i\otimes I_t)(U\otimes I_t)\wt Z
=\wt Z^\top(D^i\otimes I_t)\wt Z
\]
If one breaks $\wt Z$ into a column of $p$ many $t\times m$ blocks, then this becomes simply
\[
Z^\top A^iZ
=\sum_{j=1}^p\lambda_j^i\wt Z_j^\top\wt Z_j.\]
Rearranging this equality gives
\[
\pmat{Z^\top AZ&\cdots&Z^\top A^\ell Z}
\overset d=
\pmat{\wt Z_1^\top\wt Z_1&\cdots&\wt Z_p^\top \wt Z_p}\pare{V\otimes I_m}.
\]
Notably, by rotational invariance of $Z$, the matrices $\wt Z_j^\top\wt Z_j$ on the right are all independent Wishart matrices. Set
\[
G=\sqbrac{\pare{t\mathbf1_p^\top\otimes I_m}+\sqrt{t/2}\pmat{X_1+X_1^\top&\cdots&X_p+X_p^\top}}\pare{V\otimes I_m}
\]
where $X_j$ has i.i.d. $\normal(0,1)$ entries
and note that the entries of $G$ are indeed Gaussian random variables as they are affine functions of the entries of the $X_j$.
By \cref{a38} and \Cref{a44}, we have
\spliteq{}{
&\tvdist\pare{
\pmat{Z^\top AZ&\cdots&Z^\top A^\ell Z},G}
\\&\le
\tvdist\pare{
\pmat{\wt Z_1^\top\wt Z_1&\cdots&\wt Z_p^\top\wt Z_p},\pare{t\mathbf1^\top\otimes I}+\sqrt{t/2}\pmat{X_1+X_1^\top&\cdots&X_p+X_p^\top}
}
\\&\le
O\pare{\frac{m^{3/2}}{t^{1/2}}}.
}
Let's now compute the first two moments of $G$. Since $\E(X)=0$, the first moment is simply
\[\E(G) 
= (t\mathbf1_p^\top\otimes I_m)(V\otimes I_m)
= (t\mathbf1_p^\top V\otimes I_m)
= \pare{\pmat{\tr(A)&\cdots&\tr(A^\ell)}\otimes I_m}\]
so is determined by $\set{\tr(A^i)}_{i\le\ell}$.
The covariance of the entries is given by
\spliteq{}{
\Cov\pare{
(1\otimes e_b^\top)G(e_i\otimes e_a),\,
(1\otimes e_{b'}^\top)G(e_{i'}\otimes e_{a'})
}
&=\frac t2
\sum_{j=1}^p
\sum_{j'=1}^p
\lambda_j^i
\lambda_{j'}^i\E\pare{e_b^\top(X_j+X_j^\top)e_ae_{b'}^\top(Y_{j'}+Y_{j'}^\top)e_{a'}}
\\&=\frac t2
\sum_{j=1}^p
\lambda_j^{2i}
\E\pare{e_b^\top(X_j+X_j^\top)e_ae_{b'}^\top(Y_{j}+Y_{j}^\top)e_{a'}}
\\&=\frac 12\tr(A^{2i})\E\pare{e_b^\top(X_1+X_1^\top)e_ae_{b'}^\top(X_1+X_1^\top)e_{a'}}}
so is determined by $\set{\tr(A^{2i})}_{i\le\ell}$.
\end{proof}

\begin{proof}[Proof of \Cref{a7}]
We describe the operations performed by $f_{\textnormal{sim}}$. First, because it is given $\tr(A^i)$ for $i\le 4m$, it is able to sample a Gaussian matrix $G$ which, by \Cref{a45} for $\ell=2m$ and $t=n/p$ and the definition of total variation distance, can be written as the mixture
\[
G=\begin{cases}
\pmat{Z^\top AZ&\cdots&Z^\top A^{2m} Z} & \text{w.p. } 1-O\pare{\frac{m^{3/2}}{(n/p)^{1/2}}}\\
\star & \text{o.w.}
\end{cases}\]
where $Z\in\R^{n\times m}$ has i.i.d. $\normal(0,1)$ entries and $\star$ is some unspecified random variable. The blocks $Z^\top A^iZ$ contain the inner products $\set{z_j^\top A^iz_{j'}}_{(i,j,j')\in H}$ where $H$ is the index set in \Cref{a42}. By \Cref{a41} and \Cref{a42} respectively, there are $g$ and $h$ such that
\[
g\pare{h\pare{\set{z_j^\top A^iz_{j'}}_{(i,j,j')\in H}}}
\overset d=g\pare{\set{A^iz_j}_{i+j\le m}}
\overset d=\set{Av_j}_{j<m}.
\]
From there, we may simply apply $f_{\textnormal{alg}}$. The result follows by \cref{a38}.
\end{proof}

\section{Conclusion}\label{a46}

In this work, we studied the problem of estimating the spectral density of a large normal matrix using only matrix-vector products with $A$ and $A^*$. Our main conclusion is that  the spectral density of a normal matrix can be estimated with roughly the same complexity as a Hermitian matrix More specifically, we showed that when $n \gg 1/\epsilon^2$, for any normal matrix $A$ with $\norm{A}_2\leq 1$, it is sufficient to use $O(1/\varepsilon)$ matrix-vector products in order to return with high probability an $O(\varepsilon)$-error spectral density estimation in earth mover's distance. Moreover, we proved that this rate is asymptotically optimal, even for Hermitian input matrices, in that any algorithm with such a guarantee must in general make $\Omega(1/\varepsilon)$ matrix-vector queries.

An important direction for future research is to determine whether our arguments extend beyond the normal matrix setting. Our algorithm and analysis relies crucially on the spectral theorem and on the commutativity of $A$ and $A^*$ or the Hermitian and skew-Hermitian parts of $A$. For non-normal matrices, the matrix will not be unitarily diagonalizable and these commutativity relations break down. Moreover, the eigenvalues of $A$ may not be well-conditioned. It would therefore be interesting either to identify useful classes of nearly normal matrices for which analogues of our results still hold or to prove hardness results showing that the normal case is a genuine boundary of tractability. In our view, understanding which parts of non-Hermitian spectral computation are rendered intractable by non-normality remains one of the central problems of numerical linear algebra. 

\section*{Acknowledgments}
This material is based upon work supported by the National Science Foundation under grant nos. DMS-2513687, AF-2427362,  AF-2427363, AF-2046235, and AF-2045590. Nicholas thanks MathWorks for its support through the MathWorks Fellowship. We would like to thank Rajarshi Bhattacharjee for helpful conversations.

\bibliographystyle{alpha}
\bibliography{outbib}

\newcommand{\etalchar}[1]{$^{#1}$}
\begin{thebibliography}{CdDPL{\etalchar{+}}24}

\bibitem[APJ{\etalchar{+}}18]{b5}
Ryan Adams, Jeffrey Pennington, Matthew Johnson, Jamie Smith, Yaniv Ovadia,
  Brian Patton, and James Saunderson.
\newblock Estimating the spectral density of large implicit matrices.
\newblock {\em \arXiv{1802.03451}}, 2018.

\bibitem[BCG23]{b32}
Sergey Bobkov, Gennadiy Chistyakov, and Friedrich G\"{o}tze.
\newblock {\em Concentration and Gaussian Approximation for Randomized Sums}.
\newblock Springer Nature Switzerland, 2023.

\bibitem[BDER16]{b37}
S{\'e}bastien Bubeck, Jian Ding, Ronen Eldan, and Mikl{\'o}s~Z R{\'a}cz.
\newblock Testing for high-dimensional geometry in random graphs.
\newblock {\em Random Structures \& Algorithms}, 49(3), 2016.

\bibitem[BGVKS22]{b9}
Jess Banks, Jorge Garza-Vargas, Archit Kulkarni, and Nikhil Srivastava.
\newblock Pseudospectral shattering, the sign function, and diagonalization in
  nearly matrix multiplication time.
\newblock {\em Foundations of Computational Mathematics}, 23(6):1959--2047,
  2022.

\bibitem[BHOT24]{b33}
Nicolas Boull{\'e}, Diana Halikias, Samuel~E Otto, and Alex Townsend.
\newblock Operator learning without the adjoint.
\newblock {\em Journal of Machine Learning Research}, 25(364), 2024.

\bibitem[BJM{\etalchar{+}}25]{b18}
Rajarshi Bhattacharjee, Rajesh Jayaram, Cameron Musco, Christopher Musco, and
  Archan Ray.
\newblock Improved spectral density estimation via explicit and implicit
  deflation.
\newblock In {\em \SODA{2025}}, 2025.

\bibitem[BKM22]{b17}
Vladimir Braverman, Aditya Krishnan, and Christopher Musco.
\newblock Sublinear time spectral density estimation.
\newblock In {\em \STOC{2022}}, 2022.

\bibitem[BN23]{b35}
Ainesh Bakshi and Shyam Narayanan.
\newblock {Krylov} methods are (nearly) optimal for low-rank approximation.
\newblock In {\em \FOCS{2023}}, 2023.

\bibitem[Cai16]{b22}
Alberto Cairo.
\newblock Download the datasaurus: Never trust summary statistics alone; always
  visualize your data.
\newblock
  \url{https://web.archive.org/web/20160901105257/http://www.thefunctionalart.com/2016/08/download-datasaurus-never-trust-summary.html},
  2016.

\bibitem[CdDPL{\etalchar{+}}24]{b26}
Sinho Chewi, Jaume de~Dios~Pont, Jerry Li, Chen Lu, and Shyam Narayanan.
\newblock Query lower bounds for log-concave sampling.
\newblock {\em Journal of the ACM}, 71(4), 2024.

\bibitem[CTU21]{b16}
Tyler Chen, Thomas Trogdon, and Shashanka Ubaru.
\newblock Analysis of stochastic {L}anczos quadrature for spectrum
  approximation.
\newblock In {\em \ICML{2021}}, 2021.

\bibitem[CU24]{b21}
Cecilia Chen and John Urschel.
\newblock Estimating the numerical range with a {K}rylov subspace.
\newblock {\em \arXiv{2411.19165}}, 2024.

\bibitem[CY92]{b6}
Ling-Lie Chau and Yue Yu.
\newblock Unitary polynomials in normal matrix models and wave functions for
  the fractional quantum {Hall} effects.
\newblock {\em Physics Letters A}, 167(5-6), 1992.

\bibitem[CZ98]{b7}
Ling-Lie Chau and Oleg Zaboronsky.
\newblock On the structure of correlation functions in the normal matrix model.
\newblock {\em Communications in mathematical physics}, 196(1), 1998.

\bibitem[DEM26]{b39}
Michal Derezi{\'n}ski, Ethan~N Epperly, and Raphael~A Meyer.
\newblock The matrix-vector complexity of {$ Ax= b$}.
\newblock {\em \arXiv{2602.04842}}, 2026.

\bibitem[DMN25]{b34}
Francesco Dell'Accio, Francisco Marcell{\'a}n, and Federico Nudo.
\newblock An interpolation--regression approach for function approximation on
  the disk and its application to cubature formulas.
\newblock {\em Advances in Computational Mathematics}, 51(6), 2025.

\bibitem[FP57]{b20}
Ky~Fan and Gordon Pall.
\newblock Imbedding conditions for {Hermitian} and normal matrices.
\newblock {\em Canadian Journal of Mathematics}, 9, 1957.

\bibitem[GKX19]{b2}
Behrooz Ghorbani, Shankar Krishnan, and Ying Xiao.
\newblock {An investigation into neural net optimization via {H}essian
  eigenvalue density}.
\newblock In {\em International Conference on Machine Learning}, pages
  2232--2241. PMLR, 2019.

\bibitem[HMT11]{b12}
Nathan Halko, Per-Gunnar Martinsson, and Joel~A Tropp.
\newblock Finding structure with randomness: Probabilistic algorithms for
  constructing approximate matrix decompositions.
\newblock {\em SIAM review}, 53(2), 2011.

\bibitem[Jac12]{b29}
Dunham Jackson.
\newblock On approximation by trigonometric sums and polynomials.
\newblock {\em Transactions of the American Mathematical society}, 13(4), 1912.

\bibitem[JL13]{b38}
Tiefeng Jiang and Danning Li.
\newblock Approximation of rectangular beta-{Laguerre} ensembles and large
  deviations.
\newblock {\em Journal of Theoretical Probability}, 28(3), 2013.

\bibitem[JP94]{b0}
Janez Jaklic and Peter Prelovsek.
\newblock {Lanczos} method for the calculation of finite-temperature quantities
  in correlated systems.
\newblock {\em Physical Review B}, 49(7), 1994.

\bibitem[Lan50]{b10}
Cornelius Lanczos.
\newblock An iteration method for the solution of the eigenvalue problem of
  linear differential and integral operators.
\newblock {\em Journal of Research of the National Bureau of Standards}, 45(4),
  1950.

\bibitem[LSY16]{b1}
Lin Lin, Yousef Saad, and Chao Yang.
\newblock Approximating spectral densities of large matrices.
\newblock {\em SIAM review}, 58(1), 2016.

\bibitem[LXES19]{b3}
Ruipeng Li, Yuanzhe Xi, Lucas Erlandson, and Yousef Saad.
\newblock The eigenvalues slicing library {(EVSL)}: Algorithms, implementation,
  and software.
\newblock {\em SIAM Journal on Scientific Computing}, 41(4), 2019.

\bibitem[MF17]{b23}
Justin Matejka and George Fitzmaurice.
\newblock Same stats, different graphs: Generating datasets with varied
  appearance and identical statistics through simulated annealing.
\newblock In {\em Proceedings of the 2017 CHI Conference on Human Factors in
  Computing Systems}, pages 1290--1294, 2017.

\bibitem[MM15]{b14}
Cameron Musco and Christopher Musco.
\newblock Randomized block {K}rylov methods for stronger and faster approximate
  singular value decomposition.
\newblock {\em \NIPS{2015}}, 28, 2015.

\bibitem[MMMW21]{b24}
Raphael~A. Meyer, Cameron Musco, Christopher Musco, and David~P. Woodruff.
\newblock Hutch++: Optimal stochastic trace estimation.
\newblock In {\em \SOSA{2021}}, 2021.

\bibitem[MMRS25]{b19}
Cameron Musco, Christopher Musco, Lucas Rosenblatt, and Apoorv~Vikram Singh.
\newblock Sharper bounds for {C}hebyshev moment matching, with applications.
\newblock {\em \COLT{2025}}, 2025.

\bibitem[NS64]{b31}
D.~J. Newman and H.S. Shapiro.
\newblock {Jackson's} theorem in higher dimensions.
\newblock In {\em On Approximation Theory / {\"U}ber Approximationstheorie:
  Proceedings of the Conference held in the Mathematical Research Institute at
  Oberwolfach}, pages 208--219, 1964.

\bibitem[RB25]{b27}
I.O. Raikov and Y.M. Beltukov.
\newblock The kernel polynomial method based on {J}acobi polynomials.
\newblock {\em Applied Mathematics and Computation}, 490, 2025.

\bibitem[Riv81]{b28}
Theodore~J. Rivlin.
\newblock {\em An introduction to the approximation of functions}.
\newblock Courier Corporation, 1981.

\bibitem[RR18]{b36}
Mikl{\'o}s~Z. R{\'a}cz and Jacob Richey.
\newblock A smooth transition from {Wishart to GOE}.
\newblock {\em Journal of Theoretical Probability}, 32(2), 2018.

\bibitem[Rys63]{b8}
Herbert Ryser.
\newblock {\em Combinatorial Mathematics}.
\newblock American Mathematical Society, 1963.

\bibitem[Saa11]{b11}
Yousef Saad.
\newblock {\em Numerical methods for large eigenvalue problems: revised
  edition}.
\newblock SIAM, 2011.

\bibitem[SW23]{b15}
William Swartworth and David~P. Woodruff.
\newblock Optimal eigenvalue approximation via sketching.
\newblock In {\em \STOC{2023}}, 2023.

\bibitem[Tim63]{b30}
Aleksandr~F. Timan.
\newblock {\em Theory of Approximation of Functions of a Real Variable}.
\newblock International Series of Monographs in Pure and Applied Mathematics.
  Pergamon Press, 1963.

\bibitem[Woo14]{b13}
David~P. Woodruff.
\newblock Sketching as a tool for numerical linear algebra.
\newblock {\em Foundations and Trends{\textregistered} in Theoretical Computer
  Science}, 10(1-2), 2014.

\bibitem[WWAF06]{b4}
Alexander Wei{\ss}e, Gerhard Wellein, Andreas Alvermann, and Holger Fehske.
\newblock The kernel polynomial method.
\newblock {\em Reviews of modern physics}, 78(1), 2006.

\bibitem[WZZ22]{b25}
David~P. Woodruff, Fred Zhang, and Richard Zhang.
\newblock Optimal query complexities for dynamic trace estimation.
\newblock In {\em \NIPS{2022}}, 2022.

\end{thebibliography}

\end{document}